\newtheorem{thm}{Theorem}[section]
\newtheorem{lem}[thm]{Lemma}
\newtheorem{prop}[thm]{Proposition}
\newtheorem{rem}[thm]{Remark}
\theoremstyle{definition}
\newtheorem{defn}[thm]{Definition}
\newcommand{\A}{\mathcal{A}}
\newcommand{\F}{\mathcal{F}}
\newcommand{\Q}{\mathbb{Q}}
\newcommand{\Z}{\mathbb{Z}}
\newcommand{\R}{\mathbb{R}}
\def\k{\textrm{\textup{\textmd{\textbf{k}}}}}
\font\sevency=wncyr7 \def\sh{\,\hbox{\sevency X}}
\font\sevency=wncyr7 \def\sh{\,\hbox{\sevency X}}
\def\ZF{\mathit{Z}_{\mathcal{F}}}
\begin{document}
\title[A note on derivation relations for MZVs and FMZVs]{A note on derivation relations for multiple zeta values and finite multiple zeta values}

\author{Yasunobu Horikawa}
\address[Yasunobu Horikawa]{Kurume University Junior / Senior High School,
 20-2, Nonakamachi, Kurume-shi, Fukuoka, 839-0862, Japan}
\email{horikawa\_\,yasunobu@kurume-u.ac.jp}

\author{Hideki Murahara}
\address[Hideki Murahara]{Nakamura Gakuen University,
 5-7-1, Befu, Jonan-ku Fukuoka-shi, Fukuoka, 814-0198, Japan} 
\email{hmurahara@nakamura-u.ac.jp}

\author{Kojiro Oyama}
\address[Kojiro Oyama]{1-31-17, Chuo, Aomori-shi, Aomori, 030-0822, Japan}
\email{k-oyama@kyudai.jp}

\keywords{Multiple zeta values, Finite multiple zeta values, Ohno's relation, Ohno type relation, Derivation relation}
\subjclass[2010]{Primary 11M32; Secondary 05A19}
\begin{abstract}
 The derivation relation is a well known relation among multiple zeta values, which was first obtained by Ihara, Kaneko and Zagier. 
 The analogous formula for finite multiple zeta values, which we call the derivation relation for finite multiple zeta values, was conjectured by the third author and proved by the second author. 

 In this paper, we show these two kinds of derivation relations are respectively equivalent to the Ohno type relations for multiple zeta values and finite multiple zeta values. 
 We also reprove these derivation relations in several different ways. 
\end{abstract}
\maketitle

\section{Introduction}
\subsection{Multiple zeta(-star) values and Finite multiple zeta(-star) values}
 For $k_1,\dots,k_r\in \Z_{\ge1}$ with $k_1 \ge 2$, the multiple zeta values (MZVs) and the multiple zeta-star values (MZSVs) are defined by 
\begin{align*}
 \zeta(k_1,\dots, k_r)=\sum_{n_1>\cdots >n_r \ge 1} \frac {1}{n_1^{k_1}\cdots n_r^{k_r}}, \\
 \zeta^\star (k_1,\dots, k_r)=\sum_{n_1\ge \cdots \ge n_r \ge 1} \frac {1}{n_1^{k_1}\cdots n_r^{k_r}}. 
\end{align*} 
They are generalizations of the values of the Riemann zeta function at positive integers.

Among a large number of variants of the MZ(S)Vs, we consider two types of finite multiple zeta(-star) values (FMZ(S)Vs);
 $\A$-finite multiple zeta(-star) values ($\A$-FMZ(S)Vs) and
 symmetric multiple zeta(-star) values (SMZ(S)Vs). 
Set $\A=(\prod_{p}\Z/p\Z)/(\bigoplus_{p}\Z/p\Z)$, where $p$ runs over all primes.  
For $k_1, \dots ,k_r \in \Z_{\ge1}$, the $\A$-FMZVs and the $\A$-FMZSVs are defined by
\begin{align*}
 \zeta_{\mathcal{A}} (k_1, \dots, k_r)
 &=\biggl( \sum_{p>n_1>\cdots >n_r\ge1}
 \frac{1}{n_1^{k_1}\cdots n_r^{k_r}} \bmod{p} \biggr)_{p} \in\A, \\ 
 \zeta_{\mathcal{A}}^{\star} (k_1, \dots, k_r)&=\biggl( \sum_{p>n_1\ge\cdots\ge n_r\ge1}
 \frac{1}{n_1^{k_1}\cdots n_r^{k_r}} \bmod{p} \biggr)_{p} \in\A. 
\end{align*}
The SMZ(S)Vs was introduced by Kaneko and Zagier \cite{kaneko_2014, kaneko_zagier_2017}. 
For $k_1,\dots ,k_r \in \Z_{\ge1}$, we let  
\begin{align*}
 \zeta_{\mathcal{S}}^{\ast} (k_1, \dots, k_r)
 &=\sum_{i=0}^{r}(-1)^{k_1+\cdots +k_i}\zeta^{\ast}(k_i, \dots , k_1) \zeta^{\ast}(k_{i+1}, \dots,k_r). 
\end{align*}
Here, the symbols $\zeta^{\ast}$ on the right-hand sides stand for the regularized values coming from harmonic regularizations,
i.e.,  real values obtained by taking constant terms of harmonic regularizations as explained in Ihara-Kaneko-Zagier \cite{ihara_kaneko_zagier_2006}. 
In the sum, we understand $\zeta^{\ast}(\emptyset )=1$.
Let $\mathcal {Z}_{\R}$ be the $\Q$-vector subspace of $\R$ spanned by $1$ and all MZVs, which is a $\Q$-algebra. 
Then, the SMZVs is defined as an element 
in the quotient ring $\mathcal {Z}_{\R}/(\zeta (2))$ by 
\[
 \zeta_{\mathcal{S}} (k_1, \dots,k_r)=\zeta_{\mathcal{S}}^{\ast} (k_1, \dots,k_r) \bmod \zeta(2) . 
\]
For $k_1, \dots,k_r \in \Z_{\ge1}$, we also define the SMZSVs by  
\[ 
 \zeta_{\mathcal{S}}^{\star} (k_1, \dots,k_r)
 =\sum_{ \substack{\circ \textrm{ is either a comma "," } \\ \textrm{ or a plus "+"}} } 
 \zeta_{\mathcal{S}}^{\ast}(k_1 \circ \cdots \circ k_r) \bmod  \zeta(2)  \in\mathcal {Z}_{\R}/(\zeta (2)). 
\]   

Denoting $\mathcal {Z}_{\A}$ by the $\Q$-vector subspace of $\A$ spanned by $1$ and all $\A$-FMZVs, Kaneko and Zagier conjecture that $\mathcal {Z}_{\A}$ and $\mathcal {Z}_{\R}/(\zeta(2))$ are isomorphic as $\Q$-algebras via the correspondence $\zeta_{\mathcal{A}} (k_1, \dots, k_r) \leftrightarrow \zeta_{\mathcal{S}} (k_1, \dots,k_r)$. 
(For more details, see Kaneko-Zagier \cite{kaneko_2014, kaneko_zagier_2017}.) 
In what follows, we call $\A$-FMZVs and SMZVs as FMZVs.

\subsection{Main results}
The topics that we present in this paper are derivation relations and Ohno type relations. 
There are 5 types of relations we mainly discuss in this paper. 
\begin{itemize}
 \item Derivation relation for MZVs (Ihara-Kaneko-Zagier \cite{ihara_kaneko_zagier_2006}; Theorem \ref{der_MZVs})
 \item Ohno's relation for MZVs (Ohno \cite{ohno_99}; Theorem \ref{ohno})
 \item Ohno type relation for MZVs (Theorem \ref{ohno-type_MZVs2})
 \item Derivation relation for FMZVs (Murahara \cite{murahara_2016}; Theorem \ref{der_FMZVs})
 \item Ohno type relation for FMZVs (Oyama \cite{oyama_2015}; Theorem \ref{ohno-type_FMZVs})
\end{itemize}
The derivation relation proved in \cite{ihara_kaneko_zagier_2006} (Theorem \ref{der_MZVs}) is the well known relation among MZVs. 
As an analogue, the second auther \cite{murahara_2016} proved its counterpart for FMZVs (Theorem \ref{der_FMZVs}). 
On the other hand, Ohno \cite{ohno_99} obtained one of the most general explicit relations among MZVs. 
This is commonly reffered to as `Ohno's relation' (Theorem \ref{ohno}). 
The third auther \cite{oyama_2015} obtained its analogue for FMZVs (Theorem \ref{ohno-type_FMZVs}).  
In section $2$, we present its counterpart for MZVs (`Ohno type relation', Theorem \ref{ohno-type_MZVs2}).

In this paper, we will show the followings: 

\noindent $1.$ 
The equivalence between the derivation relation  
and the Ohno type relation for MZVs

\noindent $2.$ 
The equivalence between the derivation relation 
and the Ohno type relation for FMZVs

\noindent $3.$ Two alternative proofs of the derivation relation for MZVs

\noindent $4.$ Three alternative proofs of the derivation relation for FMZVs

 \begin{table}[h]
  \centering
  \begin{tabular}{|c|ccc|}
    \hline
     & Derivation rel. && Ohno type rel. \\
    \hline 
     MZVs & Theorem \ref{der_MZVs} (\cite{ihara_kaneko_zagier_2006}) &$\Leftrightarrow$& Theorem \ref{ohno-type_MZVs2} (this paper) \\
    \hline 
    FMZVs & Theorem \ref{der_FMZVs} (\cite{murahara_2016}) &$\Leftrightarrow$& Theorem \ref{ohno-type_FMZVs} (\cite{oyama_2015}) \\
    \hline
  \end{tabular}
 \end{table}

\section{Derivation relation and Ohno type relation for multiple zeta values and their equivalence}
\subsection{Derivation relation (MZVs)}
Let $\frak{H} =\Q \left\langle x,y \right\rangle$ be the noncommutative polynomial ring in two indeterminates $x$, $y$, and $\frak{H}^1$ (resp. $\frak{H}^0$) its subring $\Q +\frak{H} y$ (resp. $\Q +x \frak{H} y$). 
Set $z_{k} = x^{k-1} y$ $(k\in\Z_{\ge1})$. 
We define the $\Q$-linear map $\mathit{Z}\colon\frak{H}^0 \to \R$ by $\mathit{Z}(1)=1$ and $\mathit{Z}( z_{k_1} \cdots z_{k_r})= \zeta (k_1,\ldots, k_r)$. 

A derivation $\partial$ on $\frak{H}$ is a $\Q$-linear map $\partial\colon\frak{H}\to\frak{H}$ satisfying Leibniz's rule $\partial(ww^{\prime})=\partial(w)w^{\prime}+w\partial(w^{\prime})$. 
Such a derivation is uniquely determined by its images of generators $x$ and $y$. Set $z=x+y$. 
For each $l\in\Z_{\ge1}$, the derivation $\partial_l$ on $\frak{H}$ is defined by $\partial_l(x)=xz^{l-1}y$ and $\partial_l(y)=-xz^{l-1}y$. 
We note that $\partial_l(1)=0$ and $\partial_l(z)=0$. 
\begin{thm}[Derivation relation for MZVs; Ihara-Kaneko-Zagier \cite{ihara_kaneko_zagier_2006}] \label{der_MZVs}
For $l\in\Z_{\ge1}$, we have 
 \begin{align*} 
 \mathit{Z} (\partial_{l}(w)) = 0 \quad (w \in \frak{H}^0).  
 \end{align*}
\end{thm}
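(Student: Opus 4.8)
The plan is to prove the derivation relation by reducing it to Ohno's relation (Theorem \ref{ohno}), which the paper announces as being equivalent. Since the whole point of Section 2 is to establish this equivalence in one direction and give fresh proofs, I would first set up the algebraic framework needed to state Ohno's relation in the $\mathfrak{H}$-formalism: introduce the $\Q$-linear ``Ohno operator'' $O_h$ (for a formal parameter $h$, or rather the collection of operators $\{O_m\}_{m\ge0}$ extracting the coefficient of $h^m$) which acts on an index word by adding $h$ to the exponents in all possible ways, equivalently on the dual side. Ohno's relation then reads $O_m(w - \tau(w)) = 0$ in $\mathcal{Z}_\R$ for all $w\in\mathfrak{H}^0$ and all $m\ge0$, where $\tau$ is the duality anti-automorphism swapping $x\leftrightarrow y$.

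Next I would make the bridge between the derivation operators $\partial_l$ and the Ohno operators explicit. The key observation, going back to Ihara--Kaneko--Zagier, is that the generating series $\sum_{l\ge1}\partial_l\, \dfrac{t^l}{l}$ or its exponential can be rewritten, after composing with $\tau$ on one side, in terms of the map that governs Ohno's relation. Concretely, one shows that there is an identity of operators on $\mathfrak{H}$ of the form $\exp\!\Big(\sum_{l\ge1}\partial_l \tfrac{t^l}{l}\Big) = \tau\circ \sigma_t \circ \tau$ (or a variant with the Ohno ``add-$h$'' operator $\sigma_t$), and that $\mathit{Z}\circ\partial_l$ extracts a specific coefficient from $\mathit{Z}$ applied to the Ohno-type combination. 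Then Theorem \ref{der_MZVs} follows from Theorem \ref{ohno} by comparing coefficients of $t^l$. The main steps in order: (i) recall/prove Ohno's relation in operator form; (ii) establish the operator identity relating $\prod(1-\partial_l t^l)^{-1}$-type series with the duality-conjugated add-$h$ operator; (iii) apply $\mathit{Z}$, invoke $\mathit{Z}(w)=\mathit{Z}(\tau w)$ (duality for MZVs, itself a special case of Ohno), and read off the $t$-linear term to recover $\mathit{Z}(\partial_l(w))=0$.

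An alternative line of attack, which I would present as the second proof alluded to in item 3 of the introduction, is to avoid Ohno entirely and argue directly via the iterated-integral representation. Here one writes $\zeta(k_1,\dots,k_r)$ as an iterated integral over the simplex $1>t_1>\cdots>t_n>0$ with $1$-forms $\omega_0 = dt/t$ and $\omega_1 = dt/(1-t)$, so that $w\in\mathfrak{H}^0$ with $x\mapsto\omega_0$, $y\mapsto\omega_1$. One then interprets $\partial_l$ as the derivation induced by inserting the extra form block corresponding to $xz^{l-1}y$ at each letter with the prescribed signs, and shows that $\mathit{Z}(\partial_l(w))$ telescopes: the contributions from inserting at an $x$ and at the neighbouring $y$ cancel in pairs after an integration-by-parts / Stokes argument on the relevant simplices, leaving $0$. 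The bookkeeping of which insertions cancel against which is the delicate part.

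In either route, the main obstacle is the same: controlling the combinatorics of the operator identity (equivalently, the cancellation of integrals) cleanly enough that the reduction is genuinely rigorous rather than a formal manipulation of generating series. In the Ohno-reduction approach the subtle point is verifying that the exponential-of-derivations series really does equal the duality-conjugated Ohno operator \emph{as operators on $\mathfrak{H}$} (not merely after applying $\mathit{Z}$), since one needs to differentiate in $t$ and evaluate at $t=0$ before $\mathit{Z}$ kills anything; checking this requires a careful induction on word length using Leibniz's rule and the defining values $\partial_l(x)=xz^{l-1}y$, $\partial_l(y)=-xz^{l-1}y$, together with $\partial_l(z)=0$. Once that operator identity is in hand, the passage to Theorem \ref{der_MZVs} is immediate.
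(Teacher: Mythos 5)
Your first route --- deduce the derivation relation from Ohno's relation --- is genuinely different from both proofs in the paper (each of which rests on the linear part of Kawashima's relation, Theorem \ref{kawashima_lin}, not on Ohno's relation), and it can be made to work; but as written the step you yourself identify as the crux is stated incorrectly, and this is a real gap rather than a technicality. Write $\sigma_u$ for the automorphism of $\widehat{\frak{H}}$ with $\sigma_u(x)=x$, $\sigma_u(y)=(1-ux)^{-1}y$, and $\Delta_u=\exp\bigl(\sum_{l\ge1}\tfrac{\partial_l}{l}(-u)^l\bigr)$ as in Section 5 of the paper. Then $\tau\sigma_t\tau(y)=y$ for every $t$, whereas $\Delta_u(y)=y+x(1+yu)^{-1}yu\neq y$, so no version of the identity $\exp\bigl(\sum_l\partial_l t^l/l\bigr)=\tau\sigma_t\tau$ can hold. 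The identity you actually need (Ihara--Kaneko--Zagier, Section 6) is $\Delta_u=(\tau\sigma_{-u}\tau)\circ\sigma_{-u}^{-1}$; since both sides are automorphisms it suffices to check that both send $x\mapsto x(1+uy)^{-1}$ and $y\mapsto y+ux(1+uy)^{-1}y$. Granting this, the deduction runs: $\sigma_{-u}^{\pm1}$ and $\tau$ preserve $x\frak{H}y$, Ohno's relation in operator form says $\mathit{Z}\circ\sigma_{-u}=\mathit{Z}\circ\sigma_{-u}\circ\tau$ on $\frak{H}^0$, and its $m=0$ case (duality) says $\mathit{Z}=\mathit{Z}\circ\tau$ on $\frak{H}^0$; hence $\mathit{Z}(\Delta_u(w))=\mathit{Z}(\tau\sigma_{-u}\tau\sigma_{-u}^{-1}(w))=\mathit{Z}(\sigma_{-u}\tau\sigma_{-u}^{-1}(w))=\mathit{Z}(w)$ for $w\in\frak{H}^0$. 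Even then, ``reading off the $t$-linear term'' only gives $\mathit{Z}(\partial_1(w))=0$: the coefficient of $u^l$ in $\Delta_u$ is $\tfrac{(-1)^l}{l}\partial_l$ plus a polynomial in $\partial_1,\dots,\partial_{l-1}$, so you must finish by induction on $l$, using that each $\partial_j$ maps $x\frak{H}y$ into itself so that the previously established relations kill the lower-order monomials. None of these three missing steps is difficult, but all three are missing, and the one you flag as the main obstacle is the one you got wrong.

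Your second route (telescoping cancellation of iterated integrals under insertion of the block $xz^{l-1}y$) is too vague to assess: you do not specify which insertions pair off or what Stokes argument is applied, and for $l\ge2$ the block $xz^{l-1}y$ expands into $2^{l-1}$ words in $x,y$, so there is no evident pairing; I am not aware of a direct integral proof along these lines and would drop this part. For comparison, the paper's first proof computes $\tilde{S}\partial_lS(w)=-L_x\tilde{\alpha}(z_{k_1}\cdots z_{k_r}\,\overline{\ast}\,z_l)$ and invokes the star half of Kawashima's relation (via the Ihara--Kajikawa--Ohno--Okuda equivalence), and its second proof establishes the stronger Theorem \ref{der_MZVs_ext} by induction on the length of $w$, with the base case again an instance of Kawashima's relation. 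Your Ohno reduction, once repaired as above, is a legitimate third proof, but note that it trades one deep external input (Kawashima's relation) for another (Ohno's relation).
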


\subsection{Ohno's relation and Ohno type relation (MZVs)}
As described above, Ohno's relation is well known relation among MZVs. 
To state Ohno's relation, we need to define the dual index. 
\begin{defn}[Dual index] \label{du}  
Let $\k=(k_1,\ldots ,k_r) \in \Z_{\ge1}^r$ be an index with $k_1 \ge 2$. 
We write 
\[ \k=(a_1+1, \underbrace{1,\ldots ,1}_{b_1-1},\ldots ,a_s+1, \underbrace{1,\ldots ,1}_{b_s-1}) \] 
with $a_p, b_q\ge 1$. 
Then, the dual index of $\k$ is denoted by 
\[ (b_s+1, \underbrace{1,\ldots ,1}_{a_s-1},\ldots ,b_1+1, \underbrace{1,\ldots ,1}_{a_1-1}). \]
\end{defn}
\begin{thm}[Ohno's relation; Ohno \cite{ohno_99}] \label{ohno} 
For $\k =(k_1,\ldots,k_r) \in \Z_{\ge1}^r$ with $k_1 \ge2$ and $m\in\Z_{\ge 0}$, we have
\begin{align*}
 \sum_{\substack{e_1 + \cdots + e_r =m \\ e_i \ge 0 \, (1\le i \le r)}}
 \zeta (k_1+e_1,\ldots,k_r+e_r) 
 =\sum_{\substack{e_1' + \cdots + e_{r'}' = m \\ e_i' \ge 0  \, (1\le i \le r') }}
 \zeta (k_1'+e_1',\cdots,k_{r'}'+e_{r'}'), 
\end{align*}
where $(k'_1,\ldots ,k'_{r'})$ is the dual index of $\k$. 
\end{thm}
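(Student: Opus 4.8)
Following Ohno, the plan is to package the two sides of the asserted identity into generating functions in a single auxiliary variable and to reduce the whole family of identities (over all $m$) to one symmetry statement, of which the case $m=0$ is the duality $\zeta(\k)=\zeta(\k')$ of multiple zeta values. Introduce a formal parameter $u$ and set
\[
  G(\k;u)=\sum_{m\ge0}\Biggl(\ \sum_{\substack{e_1+\cdots+e_r=m\\ e_i\ge0}}\zeta(k_1+e_1,\ldots,k_r+e_r)\Biggr)u^m .
\]
Expanding each multiple zeta value as a nested sum and performing the geometric summation in each $e_i$ gives
\[
  G(\k;u)=\sum_{n_1>\cdots>n_r\ge1}\ \prod_{i=1}^{r}\frac{1}{n_i^{k_i-1}(n_i-u)}\qquad(|u|<1).
\]
Since the dual index $\k'$ has first entry $\ge2$ and the same weight as $\k$, the right-hand side of the theorem is the coefficient of $u^m$ in $G(\k';u)$, so Theorem~\ref{ohno} is equivalent to the single symmetry $G(\k;u)=G(\k';u)$.

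To attack this symmetry I would pass to the iterated-integral representation of multiple zeta values, with $\omega_0=dt/t$, $\omega_1=dt/(1-t)$, and the word $z_{k_1}\cdots z_{k_r}=x^{k_1-1}y\cdots x^{k_r-1}y$ read as a word $\varepsilon_1\cdots\varepsilon_k$ in $\{0,1\}$. The same summation that defines $G$ collapses, on the integral side, the inserted strings of $x$'s into a twist factor $\prod_{i=1}^{r}(t_{p_i-1}/t_{p_i})^u$, where $p_i$ is the position of the $i$-th $y$; hence
\[
  G(\k;u)=\int_{1>t_1>\cdots>t_k>0}\ \prod_{j=1}^{k}\omega_{\varepsilon_j}(t_j)\ \prod_{i=1}^{r}\Bigl(\frac{t_{p_i-1}}{t_{p_i}}\Bigr)^{u}.
\]
The aim is then to exploit the path-reversal substitution $t\mapsto1-t$, which on the untwisted integral reverses the word and exchanges $\omega_0\leftrightarrow\omega_1$ up to orientation, replacing the word of $\k$ by the word of $\k'$ and thereby proving duality (the case $m=0$); comparing the coefficients of $u^m$ would then yield the theorem.

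The geometric-series computations and the sign bookkeeping in the reversal are routine. The main obstacle is that the twisted integral above does \emph{not} transform compatibly under $t\mapsto1-t$: the twist factor involves the variables $t_j$ rather than the $1-t_j$. One must first rewrite the twisted integral in a form in which the reversal symmetry is manifest and which the substitution carries from the word of $\k$ to the word of $\k'$ \emph{together with} the correct twist; this rewriting, together with the care needed for the integrable singularities at $t=0,1$ (handled by keeping $u$ small and real, or by treating everything as a formal power series in $u$), is the technical heart of the proof. A more recent alternative avoids iterated integrals entirely: one builds a ``connected'' double sum in two indices that reduces to $G(\k;u)$ when the second index is empty and, after transport, to $G(\k';u)$ when the first is empty, and whose value is preserved by a two-term transport identity for a suitable $u$-dependent connecting factor; moving the blocks across one at a time then gives $G(\k;u)=G(\k';u)$.
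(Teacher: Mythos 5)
The paper offers no proof of this statement: Theorem~\ref{ohno} is quoted from Ohno's 1999 paper as known background (the paper only proves things \emph{about} it, e.g.\ the equivalence of Theorem~\ref{der_MZVs} with Theorem~\ref{ohno-type_MZVs2}). So your proposal can only be judged on its own terms. The opening reduction is correct and standard: the geometric summation giving
\[
  G(\k;u)=\sum_{n_1>\cdots>n_r\ge1}\ \prod_{i=1}^{r}\frac{1}{n_i^{k_i-1}(n_i-u)}
\]
is right, duality preserves weight so the theorem is indeed equivalent to the single identity $G(\k;u)=G(\k';u)$, and the twisted iterated-integral expression (with the convention $t_0=1$) is also essentially correct.

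The problem is that the proof stops exactly where the theorem begins. You observe yourself that the substitution $t\mapsto 1-t$ does \emph{not} carry the twisted integral attached to $\k$ to the one attached to $\k'$, because the twist factors $(t_{p_i-1}/t_{p_i})^u$ sit at the positions of the $y$'s of $\k$, whereas after reversal they would need to sit at the positions of the $y$'s of $\k'$; you then defer the required rewriting as ``the technical heart of the proof.'' That rewriting \emph{is} Ohno's theorem: for $u=0$ the symmetry is just duality, and the entire content of the result is that the deformation in $u$ respects it. No known argument gets this for free from the change of variables; Ohno's original proof proceeds by an explicit binomial/partial-fraction computation combined with duality and the sum formula, Ihara--Kaneko--Zagier derive it from the derivation relation plus duality, and the ``connected sum'' route you mention in the last sentence (due to Seki and Yamamoto) requires defining the $u$-dependent connector and verifying its two transport identities and boundary values --- none of which appears here. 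As written, the proposal is a correct reformulation of the statement plus two candidate strategies, not a proof; to complete it you would need to either carry out the symmetric rewriting of the twisted integral in full, or actually set up the connected sum and prove its transport relations.
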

We introduce the results similar to Theorem \ref{ohno}.
The main difference between Theorem \ref{ohno} and the following Thoerem \ref{ohno-type_MZVs2} is whether to describe the statement by the dual index or the Hoffman's dual index.
Thoerem \ref{ohno-type_MZVs2} is essentially contained in Theorem \ref{ohno} and is equivalent to Theorem \ref{der_MZVs}.
\begin{defn}[Hoffman's dual index]
 For $\k =(k_1,\dots,k_r) \in \Z_{\ge1}^r$, we define Hoffman's dual index of $\k$ by
 \begin{align*}
  \k^{\vee}=(\underbrace{1,\dots,1}_{k_1}+\underbrace{1,\dots,1}_{k_2}+1,\dots,1+\underbrace{1,\dots,1}_{k_r}).
 \end{align*}
\end{defn}
\begin{thm}[Ohno type relation for MZVs] \label{ohno-type_MZVs2} 
 For $(k_1,\dots,k_r) \in \Z_{\ge1}^r$ and $m \in \Z_{\ge 0}$, we have
 \begin{align*}
  \sum_{\substack{e_1 + \cdots + e_r =m \\ e_i \ge 0 \, (1\leq i \le r)}} 
  \zeta (k_1+e_1+1,k_2+e_2,\dots,k_r+e_r)
  =\sum_{\substack{e_1' + \cdots + e_{r'}' = m \\ e_i' \ge 0 \, (1\leq i \le r') }}
  \zeta ((1,k'_1+e'_1,\dots,k'_{r'}+e'_{r'})^{\vee}),  
 \end{align*}
where $(k'_1,\ldots ,k'_{r'})$ is Hoffman's dual index of $(k_1,\dots,k_r)$. 
\end{thm}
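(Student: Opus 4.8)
The plan is to deduce Theorem \ref{ohno-type_MZVs2} from Ohno's relation (Theorem \ref{ohno}) together with some combinatorics of dual indices. Throughout set $n=k_1+\cdots+k_r$, write $(k_1',\dots,k_{r'}')=\k^{\vee}$ for Hoffman's dual of $\k=(k_1,\dots,k_r)$ (so $r'=n-r+1$), and let $\dagger$ denote the operation ``take the dual index'' of Definition \ref{du}, defined on any index whose first entry is $\ge2$. The first step is a reindexing: the index $(k_1+1,k_2,\dots,k_r)$ is admissible (its first entry is $k_1+1\ge2$), and $\zeta(k_1+e_1+1,k_2+e_2,\dots,k_r+e_r)=\zeta\bigl((k_1+1)+e_1,k_2+e_2,\dots,k_r+e_r\bigr)$, so the left-hand side of Theorem \ref{ohno-type_MZVs2} is exactly the left-hand side of Ohno's relation for $(k_1+1,k_2,\dots,k_r)$ with the same $m$. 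Applying Theorem \ref{ohno} gives
\[
 \sum_{e_1+\cdots+e_r=m}\zeta(k_1+e_1+1,k_2+e_2,\dots,k_r+e_r)
 =\sum_{f_1+\cdots+f_{r'}=m}\zeta\bigl((k_1+1,k_2,\dots,k_r)^{\dagger}+(f_1,\dots,f_{r'})\bigr),
\]
where we used that $(k_1+1,k_2,\dots,k_r)^{\dagger}$ has depth $(n+1)-r=r'$, so the new sum runs over $(f_1,\dots,f_{r'})\in\Z_{\ge0}^{\,r'}$.

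Next I would transform the right-hand side of Theorem \ref{ohno-type_MZVs2}. Because the leading $1$ in $(1,k_1'+e_1,\dots,k_{r'}'+e_{r'})$ is absorbed into the following block under Hoffman's dual, the index $(1,k_1'+e_1,\dots,k_{r'}'+e_{r'})^{\vee}$ always has first entry $\ge2$, hence is admissible, and the duality theorem (the case $m=0$ of Theorem \ref{ohno}) lets us replace each summand $\zeta\bigl((1,k_1'+e_1,\dots,k_{r'}'+e_{r'})^{\vee}\bigr)$ by $\zeta\bigl(((1,k_1'+e_1,\dots,k_{r'}'+e_{r'})^{\vee})^{\dagger}\bigr)$. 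Comparing with the display above, Theorem \ref{ohno-type_MZVs2} is reduced to the purely combinatorial identity of indices
\[
 \bigl((1,k_1'+e_1,\dots,k_{r'}'+e_{r'})^{\vee}\bigr)^{\dagger}
 =(k_1+1,k_2,\dots,k_r)^{\dagger}+(e_{r'},e_{r'-1},\dots,e_1)
 \qquad(e_1,\dots,e_{r'}\in\Z_{\ge0}),
\]
since summing this over $e_1+\cdots+e_{r'}=m$ and substituting $(f_1,\dots,f_{r'})=(e_{r'},\dots,e_1)$ identifies the two sides of the theorem.

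To establish this identity I would work in $\frak{H}$, encoding an index as $z_{k_1}\cdots z_{k_r}=x^{k_1-1}y\cdots x^{k_r-1}y$ and using the word-level descriptions of the two dualities: taking $\dagger$ corresponds to reversing the word and interchanging $x\leftrightarrow y$, whereas taking $\vee$ corresponds to interchanging $x\leftrightarrow y$ in every letter except the final one. A short computation then shows that for any index $\mathbf{l}$ with word $v_0\,y$ the word of $(1,\mathbf{l})^{\vee}$ is $x\,S(v_0)\,y$, hence the word of $\bigl((1,\mathbf{l})^{\vee}\bigr)^{\dagger}$ is $x\,R(v_0)\,y$, where $S$ is the letter interchange and $R$ the reversal; taking $\mathbf{l}=(k_1'+e_1,\dots,k_{r'}'+e_{r'})$ and comparing $x$-runs with the word $x\,R(S(w_0))\,y$ of $(k_1+1,k_2,\dots,k_r)^{\dagger}$ (where $w_0\,y$ is the word of $\k$) yields the identity, the reversal of the increment tuple arising because $R$ transports the inserted $x$'s to the opposite end of the word. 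This last bookkeeping with words is the only real obstacle. Since no analytic input beyond Theorem \ref{ohno} enters, one sees that Theorem \ref{ohno-type_MZVs2} is ``essentially contained in'' Ohno's relation; moreover, running the same chain of identities backwards expresses every instance of Ohno's relation as an instance of Theorem \ref{ohno-type_MZVs2}, which is the natural starting point for establishing its equivalence with the derivation relation (Theorem \ref{der_MZVs}).
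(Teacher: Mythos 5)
Your proposal is correct, but it reaches Theorem \ref{ohno-type_MZVs2} by a genuinely different route than the paper does. The paper never proves this theorem directly: it is obtained as a corollary of Theorem \ref{equivalence1}, which rewrites the Ohno type relation in operator form as $\mathit{Z}(L_x(\sigma_m-\tilde{\alpha}\sigma_m\tilde{\alpha})(w))=0$ and shows it equivalent to the operator form $\mathit{Z}((\sigma_m-\tau\sigma_m\tau)L_x(w))=0$ of the derivation relation, using the identity $\tau=L_xR\tilde{\alpha}L_x^{-1}$ on $x\frak{H}y$ together with $L_x\sigma=\sigma L_x$, $R\sigma_m=\sigma_m R$, $R\tilde{\alpha}=\tilde{\alpha}R$; the derivation relation itself is then imported from Ihara--Kaneko--Zagier. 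You instead deduce the theorem from Ohno's relation (Theorem \ref{ohno}) plus duality, reducing everything to the index identity $\bigl((1,\k^{\vee}+(e_1,\dots,e_{r'}))^{\vee}\bigr)^{\dagger}=(k_1+1,k_2,\dots,k_r)^{\dagger}+(e_{r'},\dots,e_1)$, which I have checked is correct (your word-level descriptions of $\vee$ and $\dagger$ are right, and the reversal of the increment tuple comes out as you say). It is worth noticing that your combinatorial identity is exactly the paper's operator identity in disguise: the statement that $(1,\mathbf{l})^{\vee}$ has word $x\,\tilde{\alpha}(\mathbf{l})$ and that $\dagger$ then produces $x\,R(\cdot)\,y$ is the relation $\tau L_x\tilde{\alpha}=L_xR$ on $\frak{H}y$, and the reversal of $(e_1,\dots,e_{r'})$ is the commutation $R\sigma_m=\sigma_m R$. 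So the algebraic core is shared; what differs is the known theorem you start from (Ohno's relation rather than the derivation relation). Your route has the merit of making precise the paper's unproved remark that Theorem \ref{ohno-type_MZVs2} is ``essentially contained in'' Theorem \ref{ohno}, and your closing observation that the reduction is reversible (every admissible index is of the form $(k_1+1,k_2,\dots,k_r)$) upgrades ``contained in'' to an equivalence modulo duality; the paper's route, on the other hand, is what feeds directly into the equivalence with the derivation relation that is the main point of Section 2. The only part you have left as a sketch is the final block-by-block bookkeeping for the index identity, but the description you give of it is accurate and it goes through without surprises.
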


\subsection{The equivalence of derivation relation and Ohno type relation (MZVs)}
In this subsection, we prove the following:
\begin{thm} \label{equivalence1}
 The derivation relation (Theorem \ref{der_MZVs}) and the Ohno type relation (Theorem \ref{ohno-type_MZVs2})
 are equivalent. 
\end{thm}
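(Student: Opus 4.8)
The plan is to recast both relations, after introducing a generating parameter $u$, as identities in $\frak{H}$, and then to match them algebraically. On the derivation side, collect the $\partial_l$ into the single derivation $\partial^{(u)}:=\sum_{l\ge1}u^l\partial_l$ of $\frak{H}[[u]]$; then $\partial^{(u)}(x)=x\mu y=-\partial^{(u)}(y)$ and $\partial^{(u)}(z)=0$, where $\mu:=u(1-uz)^{-1}\in\Q[z][[u]]$, and, since extracting the coefficient of $u^l$ recovers $\partial_l$, Theorem \ref{der_MZVs} is equivalent to $\mathit{Z}(\partial^{(u)}(w))=0$ for all $w\in\frak{H}^0$. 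On the Ohno side, introduce the Ohno operator $\Theta_u$: the $\Q[[u]]$-linear endomorphism of $\frak{H}^1$ that is the homomorphism for the concatenation product $\frak{H}^1=\Q\langle z_1,z_2,\dots\rangle$ with $\Theta_u(z_k)=\sum_{m\ge0}u^m z_{k+m}=x^{k-1}(1-ux)^{-1}y$; equivalently it replaces each letter $y$ in a word of $\frak{H}^1$ by $(1-ux)^{-1}y$, so that $\Theta_u(z_{k_1}\cdots z_{k_r})=\sum_{m\ge0}u^m\sum_{e_1+\cdots+e_r=m}z_{k_1+e_1}\cdots z_{k_r+e_r}$. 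Let $h$ be the $\Q$-linear involution of $\frak{H}^1$ realizing Hoffman duality (so $h$ sends the monomial of $\k$ to that of $\k^{\vee}$); from the definition of $\k^{\vee}$ one reads off $h(w'y)=\overline{w'}\,y$, where $\overline{\phantom{x}}$ is the automorphism of $\frak{H}$ exchanging $x$ and $y$, and hence the elementary rule $h(yv)=x\,h(v)$ for $v\in\frak{H}y$.

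Using these, I repackage Theorem \ref{ohno-type_MZVs2}. Multiplying its identity by $u^m$ and summing over $m\ge0$ turns the left-hand side into $\mathit{Z}\bigl(x\,\Theta_u(w_{\k})\bigr)$ with $w_{\k}=z_{k_1}\cdots z_{k_r}$; on the right-hand side, prepending $z_1=y$ to the Ohno-shifted $\k^{\vee}$ and then applying $h$, the rule $h(yv)=x\,h(v)$ produces $\mathit{Z}\bigl(x\,h\,\Theta_u\,h(w_{\k})\bigr)$. Since the $w_{\k}$ run over a monomial basis of $\frak{H}y$, Theorem \ref{ohno-type_MZVs2} is equivalent to the single family
\[
  \mathit{Z}\bigl(x\,\Theta_u(v)\bigr)=\mathit{Z}\bigl(x\,h\,\Theta_u\,h(v)\bigr)\qquad(v\in\frak{H}y).
\]
(As a check, $x\,\Theta_u(z_1)-x\,h\,\Theta_u\,h(z_1)=\sum_{m\ge1}u^m\bigl(z_{m+2}-z_2 z_1^m\bigr)$, whose coefficient of $u$ is $z_3-z_2z_1=-\partial_1(z_2)$; applying $\mathit{Z}$ recovers $\zeta(3)=\zeta(2,1)$, an instance of both Theorems \ref{der_MZVs} and \ref{ohno-type_MZVs2}.)

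The heart of the proof is then the purely algebraic assertion, inside $\frak{H}^0$, that the two families
\[
  \{\,\partial_l(w): l\ge1,\ w\in\frak{H}^0\,\}\qquad\text{and}\qquad\{\,[u^m]\bigl(x\,\Theta_u(v)-x\,h\,\Theta_u\,h(v)\bigr): m\ge0,\ v\in\frak{H}y\,\}
\]
span one and the same $\Q$-vector subspace. Granting this, Theorem \ref{equivalence1} follows at once: applying the $\Q$-linear map $\mathit{Z}$ and comparing coefficients of powers of $u$ shows that $\mathit{Z}$ annihilates every $\partial_l(w)$ if and only if it annihilates every difference $x\,\Theta_u(v)-x\,h\,\Theta_u\,h(v)$, i.e.\ Theorem \ref{der_MZVs} holds if and only if Theorem \ref{ohno-type_MZVs2} does. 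I would establish the span equality by proving the two inclusions separately, each by a generating-function computation: expand $\Theta_u$ and $h\,\Theta_u\,h$ through the ``replace-$y$'' descriptions together with the geometric series $(1-ux)^{-1}$, expand $\partial^{(u)}$ by Leibniz's rule starting from $\partial^{(u)}(x)=x\mu y=-\partial^{(u)}(y)$, $\partial^{(u)}(z)=0$, and organize everything by induction on the depth (the number of $y$'s in $v$, equivalently the length $r$ of the index), the multiplicativity of $\Theta_u$ for concatenation and Leibniz's rule for $\partial^{(u)}$ furnishing the inductive step after removing one entry.

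The main obstacle is precisely this algebraic identity, and what makes it delicate is that it is genuinely a statement about spans rather than a term-by-term dictionary: although the coefficient of $u$ is readily seen to equal $-\partial_1(xv)$ on the nose, at higher order in $u$ (i.e.\ for $m\ge2$) the element $[u^m]\bigl(x\,\Theta_u(v)-x\,h\,\Theta_u\,h(v)\bigr)$ is only a nontrivial $\Q$-combination of several $\partial_l(w)$'s, and, in the reverse direction, each $\partial_l(w)$ must be assembled from several Ohno-type differences. Consequently the induction has to carry a sufficiently strong hypothesis — comparing whole spans in each weight rather than matching individual elements — and finding the right entry (or letter) to peel off, while keeping careful track of the central ``$z$-part'' $\mu=u(1-uz)^{-1}$ produced by $\partial^{(u)}$, is where essentially all of the work will lie.
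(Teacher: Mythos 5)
Your reformulation of the Ohno type relation is correct and in fact coincides with the one used in the paper: writing $\tilde{\alpha}$ for your $h$ and $\sigma_m$ for the coefficient of $u^m$ in your $\Theta_u$, it reads $\mathit{Z}(L_x(\sigma_m-\tilde{\alpha}\sigma_m\tilde{\alpha})(w))=0$ for $w\in\frak{H}y$. But the argument has a genuine gap at what you yourself identify as its heart: the claimed equality of the $\Q$-spans of $\{\partial_l(w)\}$ and of the coefficients of $x\,\Theta_u(v)-x\,h\,\Theta_u\,h(v)$ is asserted, not proved, and the sketched strategy (a depth induction that ``peels off one entry'') is not developed to the point where one can see it close. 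That span equality is precisely the nontrivial content of the equivalence; it is not accessible to a routine induction on depth, because the passage from the individual derivations $\partial_l$ to the Ohno operator $\sum_m\sigma_m u^m$ goes through an exponential of non-commuting operators, not through any entry-by-entry matching.

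The paper circumvents exactly this difficulty by importing from Ihara--Kaneko--Zagier the reformulation of the derivation relation as $\mathit{Z}((\sigma_m-\tau\sigma_m\tau)L_x(w))=0$ for $w\in\frak{H}y$; this is where the exponential relation between $\Delta_u=\exp\bigl(\sum_{l}\frac{\partial_l}{l}(-u)^l\bigr)$ and $\sigma$ conjugated by the duality $\tau$ is used, and it is a theorem, not a repackaging. Once both relations are written in the common shape ``$\sigma_m$ minus a duality-conjugate of $\sigma_m$'', the equivalence reduces to the elementary identity $\tau=L_xR\tilde{\alpha}L_x^{-1}$ on $x\frak{H}y$ together with the commutation relations $L_x\sigma=\sigma L_x$, $R\sigma_m=\sigma_m R$, $R\tilde{\alpha}=\tilde{\alpha}R$; the two families of elements of $\frak{H}^0$ then coincide term by term, so no span argument is needed at all. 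To repair your proof you should either invoke the IKZ reformulation at the outset, or actually carry out the exponential/logarithm computation relating your $\partial^{(u)}$ to $\Theta_u$ and $\tau$. As written, you have established only the (correct) dictionary between the two statements and their generating functions, not the equivalence itself.
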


We define $\Q$-linear map $R\colon\frak{H}y \to \frak{H}y$ by $R(z_{k_1}\dots z_{k_r})=z_{k_r}\dots z_{k_1}$. 
Let $\tau$ be the anti-automorphism of $\frak{H}$ that interchanges $x$ and $y$. 
We also let $\alpha$ be the automorphism of $\frak{H}$ that interchanges $x$ and $y$, and $\tilde{\alpha}$ be the $\Q$-linear map satisfying $\tilde{\alpha} (wy) = \alpha (w)y$.
We define the map $\sigma$ as an automorphism of $\widehat{\frak{H}}=\Q\left\langle\left\langle x,y\right\rangle\right\rangle$ satisfying $\sigma(x)=x$ and $\sigma(y)=\frac{1}{1-x}y$, and the $\Q$-linear map $\sigma_m\colon\frak{H} \to \frak{H}$ as the homogeneous degree $m$ components of $\sigma$. 
(See also \cite[Section $6$]{ihara_kaneko_zagier_2006} and \cite[Appendix]{tanaka_2009}.)
We also define the $\Q$-linear operator $L_w$ on $\frak{H}$ by $L_w(1)=1$ and $L_w(w')=ww' \,\, (w,w' \in\frak{H}, w'\ne 1)$.

We can easily confirm the following proposition. 
\begin{prop}
 For $w\in x\frak{H}y$, we have
 \[
  \tau (w)=L_x R \tilde{\alpha} L_x^{-1} (w).
 \]
\end{prop}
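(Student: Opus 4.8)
The plan is to reduce to a computation on a $\Q$-basis. Both $\tau$ and $L_x R\tilde{\alpha}L_x^{-1}$ are $\Q$-linear maps on $x\frak{H}y$, so it suffices to check the identity on the words starting with $x$ and ending with $y$; write such a word as $w=xuy$ with $u$ a word in $x,y$ (possibly empty). Since $w$ lies in $x\frak{H}\subseteq\frak{H}$, the part of $\frak{H}$ on which $L_x$ is invertible, we have $L_x^{-1}(w)=uy$; then $\tilde{\alpha}(uy)=\alpha(u)y$ by the defining property of $\tilde{\alpha}$; applying $R$ and then $L_x$ will yield some word of the shape $x v y$. On the other side, $\tau(w)=\tau(y)\tau(u)\tau(x)=x\,\tau(u)\,y$. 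So everything comes down to identifying $R(\alpha(u)y)$ with $\tau(u)\,y$.

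The only point needing an argument is how $R$ acts on words not presented in the form $z_{k_1}\cdots z_{k_r}$: I would first record the reformulation that $R(vy)=\rho(v)\,y$ for every word $v$, where $\rho$ denotes monomial reversal (the $\Q$-linear anti-automorphism of $\frak{H}$ fixing $x$ and $y$). This is immediate from the definition of $R$: writing the word $vy$ uniquely as $x^{a_1}y\cdots x^{a_s}y=z_{a_1+1}\cdots z_{a_s+1}$ one gets $R(vy)=z_{a_s+1}\cdots z_{a_1+1}=x^{a_s}y\cdots x^{a_1}y$, while reading $v=x^{a_1}y\cdots x^{a_{s-1}}y x^{a_s}$ backwards gives $\rho(v)=x^{a_s}y\cdots x^{a_1}$, so $\rho(v)y=R(vy)$. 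Granting this, and using that $\rho$ commutes with the letter-swap $\alpha$ (they act on disjoint data of a word, namely positions versus labels) and that $\tau(u)=\rho(\alpha(u))$ for every word $u$ (since $\tau$ is the anti-automorphism interchanging $x$ and $y$), one obtains $R(\tilde{\alpha}(uy))=R(\alpha(u)y)=\rho(\alpha(u))\,y=\tau(u)\,y$; applying $L_x$ restores the leading $x$ and produces $x\,\tau(u)\,y=\tau(w)$, as desired.

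I do not anticipate a genuine obstacle, in keeping with the "easily confirmed" phrasing. The things to watch are: that $L_x^{-1}$ is legitimately applied to $w$ (true because $w\in x\frak{H}y\subseteq x\frak{H}$); the degenerate case $u=1$, i.e. $w=xy$, which the formula $w\mapsto x\,\tau(u)\,y$ covers uniformly once the empty word is agreed to reverse to itself; and keeping the direction of reversal straight when passing among $R$, $\rho$ and $\tau$. In short, the whole proof is a one-line basis reduction together with the bookkeeping identity $R(vy)=\rho(v)\,y$, which merely restates the definition of $R$.
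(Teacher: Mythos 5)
Your proof is correct. Note that the paper itself gives no proof of this proposition---it is stated only as something one ``can easily confirm''---and your argument (reduction to words $w=xuy$, the bookkeeping identity $R(vy)=\rho(v)y$ with $\rho$ the monomial reversal, the commutation of $\rho$ with the letter swap $\alpha$, and the factorization $\tau=\rho\circ\alpha$ on words) is precisely the routine verification the authors intend, with the degenerate case $w=xy$ handled correctly.
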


\begin{proof}[Proof of Theorem \ref{equivalence1}] 
 According to Ihara-Kaneko-Zagier \cite{ihara_kaneko_zagier_2006}, the derivation relation is denoted by 
 $\mathit{Z} ((\sigma_m-\tau\sigma_m\tau)L_x(w))=0$ for $w\in \frak{H}y$. 
 Similarly, we find the Ohno type relation can be stated by 
 $\mathit{Z} (L_x(\sigma_m-\tilde{\alpha}\sigma_m\tilde{\alpha})(w))=0$ for $w\in\frak{H}y$. 
 We note that $L_x \sigma=\sigma L_x$, $R\sigma_m=\sigma_m R$, and $R\tilde{\alpha}=\tilde{\alpha}R$.
 The following equivalence proves the theorem: 
 \begin{align*}
 &\mathit{Z} ( (\sigma_m-\tau \sigma_m \tau) L_x  (w) )=0 \\
 \iff &\mathit{Z} ( L_x(\sigma_m-L_x^{-1}\tau \sigma_m \tau L_x) (w) )=0 \\
 \iff &\mathit{Z} ( L_x(\sigma_m-L_x^{-1}(L_x R \tilde{\alpha} L_x^{-1})\sigma_m (L_x R \tilde{\alpha} L_x^{-1})L_x) (w) )=0 \\
 \iff &\mathit{Z} ( L_x(\sigma_m-\tilde{\alpha}\sigma_m\tilde{\alpha}) (w) )=0. \qedhere
 \end{align*}  
\end{proof}

\section{Derivation relation and Ohno type relation for finite multiple zeta values and their equivalence}
The derivation relation for FMZVs is conjectured by the third author and proved by the second author \cite{murahara_2016}. 
We define two $\Q$-linear maps $\mathit{Z}_{\mathcal{A}} \colon\frak{H}^1 \to \A$
 and $\mathit{Z}_{\mathcal{S}} \colon\frak{H}^1 \to \mathcal{Z}_{\R}/(\zeta (2))$ respectively by $\mathit{Z}_{\mathcal{A}} (1)=1$
 and $\mathit{Z}_{\mathcal{A}} (z_{k_1} \cdots z_{k_r})=\zeta_{\mathcal{A}} (k_1,\ldots ,k_r)$, and $\mathit{Z}_{\mathcal{S}} (1)=1$
 and $\mathit{Z}_{\mathcal{S}} (z_{k_1} \cdots z_{k_r})=\zeta_{\mathcal{S}} (k_1,\ldots ,k_r)$. 
 For notational simplicity, we write $\mathcal{F}=\mathcal{A} \textrm{ or } \mathcal{S}$.
\begin{thm}[Derivation relation for FMZVs; Murahara \cite{murahara_2016}]  \label{der_FMZVs}
 For $l\in\Z_{\ge1}$, we have 
 \begin{align*} \label{1}
  \ZF (L_x^{-1}\partial_{l}L_x(w)) = 0 \quad (w \in \frak{H}^1).  
 \end{align*}
\end{thm}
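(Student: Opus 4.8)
The plan is to deduce Theorem \ref{der_FMZVs} from the Ohno type relation for finite multiple zeta values (Theorem \ref{ohno-type_FMZVs}), which is already available from Oyama \cite{oyama_2015}, by establishing an equivalence between the two that runs exactly parallel to Theorem \ref{equivalence1}; the whole argument will be uniform in $\F\in\{\A,\mathcal{S}\}$. As a preliminary I would record that $L_x^{-1}\partial_l L_x$ is a well-defined $\Q$-linear endomorphism of $\frak{H}^1$: one has $L_x(\frak{H}y)\subseteq x\frak{H}y$, $\partial_l(x\frak{H}y)\subseteq x\frak{H}y$ and $L_x^{-1}(x\frak{H}y)=\frak{H}y$, so $L_x^{-1}\partial_l L_x$ carries $\frak{H}y$ into $\frak{H}y$ and kills $\Q$; moreover $\frak{H}^1=L_x^{-1}(\frak{H}^0)$, so operators on $\frak{H}^0$ transport to $\frak{H}^1$ by conjugation with $L_x^{-1}$, and this is the only structural difference from the classical setting (where $\mathit{Z}$ lives on $\frak{H}^0$).

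The first main step is to rewrite the left-hand side in ``$\sigma_m$-form''. The generating-function machinery of Ihara--Kaneko--Zagier \cite[Section $6$]{ihara_kaneko_zagier_2006} (see also \cite[Appendix]{tanaka_2009}) shows that the $\Q$-span of $\{\partial_l(v):l\ge1,\ v\in\frak{H}^0\}$ equals the $\Q$-span of $\{(\sigma_m-\tau\sigma_m\tau)L_x(w):m\ge0,\ w\in\frak{H}y\}$; this is the reformulation of the derivation relation already used in the proof of Theorem \ref{equivalence1}, and it is a statement about operators on $\frak{H}$ alone, with no reference to $\mathit{Z}$. Applying the injective map $L_x^{-1}$ (both spans lie in $x\frak{H}y$) and using $\frak{H}^0=L_x(\frak{H}y)\oplus\Q$ together with $\partial_l(1)=0$, I would conclude
\begin{align*}
 & \ZF\bigl(L_x^{-1}\partial_l L_x(w)\bigr)=0 \quad (l\ge1,\ w\in\frak{H}^1) \\
 \iff & \ZF\bigl(L_x^{-1}(\sigma_m-\tau\sigma_m\tau)L_x(w)\bigr)=0 \quad (m\ge0,\ w\in\frak{H}y).
\end{align*}

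The second step is the algebraic simplification of the operator on the right. Using $L_x\sigma=\sigma L_x$ (hence $L_x^{-1}\sigma_m L_x=\sigma_m$), the Proposition $\tau=L_xR\tilde{\alpha}L_x^{-1}$ on $x\frak{H}y$, and the elementary identities $R\sigma_m=\sigma_m R$, $R\tilde{\alpha}=\tilde{\alpha}R$ and $R^2=\mathrm{id}$ on $\frak{H}y$ --- that is, by running the final display in the proof of Theorem \ref{equivalence1} without the outer $L_x$ and $\mathit{Z}$ --- one gets $L_x^{-1}(\sigma_m-\tau\sigma_m\tau)L_x=\sigma_m-\tilde{\alpha}\sigma_m\tilde{\alpha}$ on $\frak{H}y$. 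Thus Theorem \ref{der_FMZVs} is equivalent to
\[
 \ZF\bigl((\sigma_m-\tilde{\alpha}\sigma_m\tilde{\alpha})(w)\bigr)=0\qquad(m\ge0,\ w\in\frak{H}y).
\]
Finally I would identify this with Theorem \ref{ohno-type_FMZVs}: for $w=z_{k_1}\cdots z_{k_r}$ one has $\sigma_m(w)=\sum_{e_1+\cdots+e_r=m}z_{k_1+e_1}\cdots z_{k_r+e_r}$, so $\ZF(\sigma_m(w))$ is Oyama's Ohno type sum $\sum_{e_1+\cdots+e_r=m}\zeta_\F(k_1+e_1,\dots,k_r+e_r)$; and $\tilde{\alpha}$ realises Hoffman's duality, since encoding an index of weight $n$ by the set of positions of $y$ in the length-$(n-1)$ word obtained by deleting the last $y$ from $z_{k_1}\cdots z_{k_r}$ turns Hoffman duality into complementation of that set, which is exactly the effect of $\tilde{\alpha}(wy)=\alpha(w)y$. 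Hence the displayed identity is precisely the content of Theorem \ref{ohno-type_FMZVs} \cite{oyama_2015}, and Theorem \ref{der_FMZVs} follows.

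I expect the first step --- the ``$\sigma_m$-form'' reformulation --- to be the delicate one: one must make the generating-function argument of \cite[Section $6$]{ihara_kaneko_zagier_2006} precise, check that the resulting identity is genuinely an identity of operators on $\frak{H}$ (so that it is unaffected by passing from $\mathit{Z}$ to $\ZF$), and verify that it behaves well under conjugation by $L_x^{-1}$, which is where the finite setting differs from the classical one. Everything else is routine bookkeeping with $L_x^{\pm1},\sigma_m,\tau,\tilde{\alpha},R$ and the subspaces $\frak{H}^0,\frak{H}^1,\frak{H}y$. (Alternative proofs --- for instance a direct descent from the derivation relation for multiple zeta values, Theorem \ref{der_MZVs}, or a combinatorial argument through the harmonic product and duality --- also seem feasible; the route above has the merit of mirroring the multiple-zeta-value case of Theorem \ref{equivalence1} almost verbatim.)
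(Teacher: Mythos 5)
Your proof is correct, but it takes a genuinely different route from the three proofs the paper actually gives for Theorem \ref{der_FMZVs} in Section 5. What you propose is precisely the paper's Theorem \ref{equivalence2} read in the direction ``Ohno type relation $\Rightarrow$ derivation relation'', with Oyama's Theorem \ref{ohno-type_FMZVs} supplied as the external input: your operator identity $L_x^{-1}(\sigma_m-\tau\sigma_m\tau)L_x=\sigma_m-\tilde{\alpha}\sigma_m\tilde{\alpha}$ on $\frak{H}y$ is the paper's chain of equivalences with the outer $\ZF$ stripped off, and your identifications of $\sigma_m$ with the Ohno-type sum and of $\tilde{\alpha}$ with Hoffman duality are both right. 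The paper instead keeps the equivalence and the proofs separate; its three proofs are self-contained and rest only on Theorem \ref{dshF} and Lemma \ref{dualF}: the first conjugates $\partial_l$ into the derivation $\delta_l$ and recognizes $L_z^{-1}\delta_l L_z(w)=w\ast x^{l-1}y$, the second passes to the harmonic-star product and $\ZF^{\star}$, and the third uses Ihara's generating function $\Delta_u$. What your route buys is brevity; what it costs is that the substantive work is outsourced to \cite{oyama_2015} and to the Ihara--Kaneko--Zagier reformulation of the derivation relation as the span identity between $\{\partial_l L_x(w)\}$ and $\{(\sigma_m-\tau\sigma_m\tau)L_x(w)\}$ inside $x\frak{H}y$. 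You correctly single out that span identity as the delicate step; since it is a purely algebraic statement in $\frak{H}$ it does transport from $\mathit{Z}$ to $\ZF$, and the paper relies on it with exactly the same level of detail in Theorems \ref{equivalence1} and \ref{equivalence2}, so this is not a gap. The one point you should make explicit is non-circularity: your argument requires that Oyama's proof of Theorem \ref{ohno-type_FMZVs} does not itself invoke the derivation relation for FMZVs, which is indeed the case and is implicitly assumed by the paper when it presents the two statements as independently established.
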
 
\begin{rem}
 Jarossay mentioned the exsistence of the lift of Theorem \ref{der_FMZVs} for $\boldsymbol{p}$-adic $\mathcal{A}$-FMZVs (for details, see Jarossay \cite{jarossay_2017}).
\end{rem}
The third author \cite{oyama_2015} proved the Ohno type relation for FMZVs, which was first conjectured by Kaneko \cite{kaneko_2014}. 
\begin{thm}[Ohno type relation for FMZVs; Oyama \cite{oyama_2015}] \label{ohno-type_FMZVs}
 For $(k_1,\dots,k_r) \in \Z_{\ge1}^r$ and $m \in \Z_{\ge 0}$, we have
 \begin{align*}
  \sum_{\substack{e_1 + \cdots + e_r =m \\ e_i \ge 0 \, (1\leq i \le r)}} 
  \zeta_{\F} (k_1+e_1,\dots,k_r+e_r)
  =\sum_{\substack{e_1' + \cdots + e_{r'}' = m \\ e_i' \ge 0 \, (1\leq i \le r') }}
  \zeta_{\F} ((k'_1+e'_1,\dots,k'_{r'}+e'_{r'})^{\vee}),
 \end{align*}
 where $(k'_1,\dots,k'_{r'})=(k_1,\dots,k_r)^{\vee}$ is Hoffman's dual of $(k_1,\dots,k_r)$.
\end{thm}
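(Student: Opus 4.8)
The plan is to derive Theorem~\ref{ohno-type_FMZVs} from the derivation relation for FMZVs (Theorem~\ref{der_FMZVs}) by showing that, after a harmless change of bookkeeping, the two statements are literally the same relation. This mirrors the proof of Theorem~\ref{equivalence1}, but is in fact a shade cleaner because there is no outer $L_x$ to carry around.

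First I would recast the Ohno type relation operator-theoretically. Since $\sigma(z_k)=z_k+z_{k+1}+\cdots$ and $\sigma$ is multiplicative, one has $\sigma_m(z_{k_1}\cdots z_{k_r})=\sum_{e_1+\cdots+e_r=m}z_{k_1+e_1}\cdots z_{k_r+e_r}$, so the left-hand side of Theorem~\ref{ohno-type_FMZVs} equals $\ZF(\sigma_m(w))$ with $w=z_{k_1}\cdots z_{k_r}$. Next I would note that Hoffman's dual, viewed as the $\Q$-linear involution $H$ of $\frak{H}^1$ with $H(z_{k_1}\cdots z_{k_r})=z_{k'_1}\cdots z_{k'_{r'}}$, is exactly the map $\tilde\alpha$: encoding an index by the length-$n$ word $x^{k_1-1}yx^{k_2-1}y\cdots x^{k_r-1}y$ (where $n=k_1+\cdots+k_r$), the positions of $y$ other than the last form a subset $S\subseteq\{1,\dots,n-1\}$, Hoffman duality replaces $S$ by its complement, and this amounts to interchanging $x$ and $y$ in every letter but the last --- which is precisely $\tilde\alpha(w'y)=\alpha(w')y$. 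Granting $H=\tilde\alpha$, the right-hand side of Theorem~\ref{ohno-type_FMZVs} becomes $\ZF(\tilde\alpha\,\sigma_m\,\tilde\alpha(w))$, so the theorem is equivalent to
\[
 \ZF\bigl((\sigma_m-\tilde\alpha\sigma_m\tilde\alpha)(w)\bigr)=0\qquad(w\in\frak{H}^1,\ m\in\Z_{\ge0}).
\]

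Second I would bring the derivation relation into the same form. By the generating-function computation of Ihara--Kaneko--Zagier \cite[Section~6]{ihara_kaneko_zagier_2006} --- which is carried out entirely inside $\frak{H}$ and makes no reference to the evaluation map --- the elements $L_x^{-1}\partial_l L_x(w)$, as $l\ge1$ and $w\in\frak{H}^1$ vary, span the same subspace of $\frak{H}^1$ as the elements $L_x^{-1}(\sigma_m-\tau\sigma_m\tau)L_x(w)$; hence Theorem~\ref{der_FMZVs} is equivalent to $\ZF\bigl(L_x^{-1}(\sigma_m-\tau\sigma_m\tau)L_x(w)\bigr)=0$ for all $w$ and $m$. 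On $\frak{H}y$ this operator simplifies: $L_x$ commutes with $\sigma$, and by the Proposition above $\tau=L_xR\tilde\alpha L_x^{-1}$ on $x\frak{H}y$, so $L_x^{-1}\tau\sigma_m\tau L_x=R\tilde\alpha\sigma_mR\tilde\alpha$, and using $R\sigma_m=\sigma_mR$, $R\tilde\alpha=\tilde\alpha R$, $R^2=\mathrm{id}$ this collapses to $\tilde\alpha\sigma_m\tilde\alpha$. Thus $L_x^{-1}(\sigma_m-\tau\sigma_m\tau)L_x=\sigma_m-\tilde\alpha\sigma_m\tilde\alpha$ on $\frak{H}y$; the two reformulations coincide, and Theorem~\ref{ohno-type_FMZVs} follows from Murahara's Theorem~\ref{der_FMZVs}.

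The $\sigma_m$-expansion and the commutator bookkeeping are routine. The elementary-but-fiddly point is the verification $H=\tilde\alpha$, i.e.\ that Hoffman duality is complementation of subsets of $\{1,\dots,n-1\}$. The genuine obstacle is the appeal to the Ihara--Kaneko--Zagier identity identifying the span of the $\partial_l$ with that of the $\sigma_m-\tau\sigma_m\tau$: I would need to check that conjugating by the bijection $L_x\colon\frak{H}^1\to\frak{H}^0$ leaves this identity intact --- it does, since the identity lives purely in $\frak{H}$, but it deserves to be written out carefully. Should one prefer a proof not routed through Theorem~\ref{der_FMZVs}, I would instead attack the $\A$-case directly by a connected-sum argument on the sums truncated modulo $p$ and then transport the conclusion to the symmetric side, though I expect that to be appreciably longer.
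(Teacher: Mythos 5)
Your proposal is correct and follows essentially the route the paper itself takes: deducing Theorem \ref{ohno-type_FMZVs} from Theorem \ref{der_FMZVs} via the operator identity $L_x^{-1}(\sigma_m-\tau\sigma_m\tau)L_x=\sigma_m-\tilde{\alpha}\sigma_m\tilde{\alpha}$ on $\frak{H}y$, using $\tau=L_xR\tilde{\alpha}L_x^{-1}$ and the commutations $L_x\sigma=\sigma L_x$, $R\sigma_m=\sigma_m R$, $R\tilde{\alpha}=\tilde{\alpha}R$, is precisely the content of the paper's Theorem \ref{equivalence2}, with the identification of Hoffman duality with $\tilde{\alpha}$ and the reformulation of the derivation relation in terms of $\sigma_m-\tau\sigma_m\tau$ (via \cite{ihara_kaneko_zagier_2006} and the appendix of \cite{tanaka_2009}) invoked in the same way. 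The only difference is one of presentation: the paper states this as an equivalence and cites \cite{oyama_2015} for the theorem itself, whereas you run the implication in one direction, feeding in Theorem \ref{der_FMZVs} (for which the paper gives three proofs in Section 5).
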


Then, we get the following result.
\begin{thm} \label{equivalence2} 
 The derivation relation (Theorem \ref{der_FMZVs}) and the Ohno type relation (Theorem \ref{ohno-type_FMZVs})
 are equivalent. 
\end{thm}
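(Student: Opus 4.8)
The plan is to re-run the proof of Theorem~\ref{equivalence1} in the finite setting almost verbatim; the only structural change is that $\ZF$ is defined on all of $\frak{H}^1$, so there is nothing to regularize and the auxiliary $L_x$ that decorates the Ohno-type side in the multiple zeta value case is absent here. First I would put both relations into the shape ``$\ZF(\text{operator})=0$''. For the derivation relation I would take the Ihara--Kaneko--Zagier reformulation already invoked in the proof of Theorem~\ref{equivalence1} (which amounts to the equality of the $\Q$-spans of $\bigcup_l\partial_l(x\frak{H}y)$ and of $\bigcup_m(\sigma_m-\tau\sigma_m\tau)L_x(\frak{H}y)$ inside $x\frak{H}y$), conjugate it by $L_x$, and use that $L_x\colon\frak{H}^1\to\frak{H}^0$ is a bijection, so that Theorem~\ref{der_FMZVs} becomes equivalent to
\[
 \ZF\bigl(L_x^{-1}(\sigma_m-\tau\sigma_m\tau)L_x(w)\bigr)=0\qquad(w\in\frak{H}^1,\ m\ge1).
\]
For the Ohno type relation I would use that the homogeneous degree $m$ component of $\sigma$ acts by $\sigma_m(z_{k_1}\cdots z_{k_r})=\sum_{e_1+\cdots+e_r=m,\ e_i\ge0}z_{k_1+e_1}\cdots z_{k_r+e_r}$, together with the elementary (and known) fact that Hoffman's dual, read on index words, is exactly the map $\tilde\alpha$: the assignment $z_{k_1}\cdots z_{k_r}\mapsto z_{k'_1}\cdots z_{k'_{r'}}$ with $(k'_1,\dots,k'_{r'})=(k_1,\dots,k_r)^{\vee}$ coincides with $wy\mapsto\alpha(w)y$. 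Granting this, Theorem~\ref{ohno-type_FMZVs} becomes equivalent to
\[
 \ZF\bigl((\sigma_m-\tilde\alpha\sigma_m\tilde\alpha)(w)\bigr)=0\qquad(w\in\frak{H}^1,\ m\ge0).
\]

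The crucial observation is then that the chain of equivalences displayed in the proof of Theorem~\ref{equivalence1} is, before $\mathit{Z}$ is applied, a genuine identity of $\Q$-linear maps: from $\sigma_m L_x=L_x\sigma_m$, the proposition $\tau=L_xR\tilde\alpha L_x^{-1}$ on $x\frak{H}y$, and $R\tilde\alpha=\tilde\alpha R$, $R\sigma_m=\sigma_m R$, $R^2=\mathrm{id}$, one gets $(\sigma_m-\tau\sigma_m\tau)L_x=L_x(\sigma_m-\tilde\alpha\sigma_m\tilde\alpha)$ on $\frak{H}y$, hence
\[
 L_x^{-1}(\sigma_m-\tau\sigma_m\tau)L_x=\sigma_m-\tilde\alpha\sigma_m\tilde\alpha\qquad\text{on }\frak{H}^1.
\]
Therefore the left-hand sides of the two reformulated identities above are literally equal (for $m=0$ both are $0$), so Theorem~\ref{der_FMZVs} and Theorem~\ref{ohno-type_FMZVs} are equivalent. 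In short, once both relations are written in $\sigma_m$-form, the proof of Theorem~\ref{equivalence2} reduces to the same operator identity that was used for Theorem~\ref{equivalence1}.

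The parts I expect to be routine are the expansion of $\sigma_m$ and the operator identity, the latter having already been carried out for Theorem~\ref{equivalence1}. The step that will need the most care is the identification of Hoffman's dual with $\tilde\alpha$ on index words: one must unwind the ``block-and-gap'' description of $(k_1,\dots,k_r)^{\vee}$ --- interchanging the roles of ``$,$'' and ``$+$'' --- and match it with $wy\mapsto\alpha(w)y$, keeping track of the entries equal to $1$ at the two ends of an index. A secondary bookkeeping point is to verify that the conjugation $L_x^{-1}\partial_l L_x$ appearing in Theorem~\ref{der_FMZVs} matches, under the Ihara--Kaneko--Zagier reformulation, the conjugate $L_x^{-1}(\sigma_m-\tau\sigma_m\tau)L_x$ of the operator used on the multiple zeta value side; once this is in place, nothing further is needed.
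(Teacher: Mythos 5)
Your proposal is correct and follows essentially the same route as the paper: both reformulate the derivation relation as $\ZF(L_x^{-1}(\sigma_m-\tau\sigma_m\tau)L_x(w))=0$ and the Ohno type relation as $\ZF((\sigma_m-\tilde{\alpha}\sigma_m\tilde{\alpha})(w))=0$, and then conclude via the operator identity $L_x^{-1}\tau\sigma_m\tau L_x=\tilde{\alpha}\sigma_m\tilde{\alpha}$ obtained from $\tau=L_xR\tilde{\alpha}L_x^{-1}$, $L_x\sigma=\sigma L_x$, $R\sigma_m=\sigma_m R$, and $R\tilde{\alpha}=\tilde{\alpha}R$. The only difference is presentational: you make explicit the identification of Hoffman's dual with $\tilde{\alpha}$ and the use of $R^2=\mathrm{id}$, which the paper leaves implicit.
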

\begin{proof}[Proof of Theorem \ref{equivalence2}] 
 The proof is almost the same as Theorem \ref{equivalence1}. 
 We see that the derivation relation and the Ohno type relation can be stated respectively by 
 $\mathit{Z} ( L_x^{-1}(\sigma_m-\tau \sigma_m \tau) L_x  (w) )=0$ for $w\in x \frak{H}y$ and 
 $\mathit{Z} ( (\sigma_m-\tilde{\alpha}\sigma_m\tilde{\alpha})(w))=0$ for $w\in\frak{H}y$. 
 Then, we can similarly prove the theorem by the following equivalence: 
 \begin{align*}
 &\mathit{Z} ( L_x^{-1}(\sigma_m-\tau \sigma_m \tau) L_x  (w) )=0 \\
 \iff &\mathit{Z} ( (\sigma_m-L_x^{-1}\tau \sigma_m \tau L_x) (w) )=0 \\
 \iff &\mathit{Z} ( (\sigma_m-L_x^{-1}(L_x R \tilde{\alpha} L_x^{-1})\sigma_m (L_x R \tilde{\alpha} L_x^{-1})L_x) (w) )=0 \\
 \iff &\mathit{Z} ( (\sigma_m-\tilde{\alpha}\sigma_m\tilde{\alpha}) (w) )=0. \qedhere
 \end{align*} 
\end{proof}

\section{Two alternative proofs of derivation relation for MZVs}
In this section, we prove Theorem \ref{der_MZVs} in several ways. 
We define an automorphism $\phi$ of $\frak{H}$ by $\phi (x)=z=x+y, \phi (y)=-y$.
We also write 
\begin{align*}
 \frak{H}y \ast \frak{H}y=\{ w\ast w' \mid w,w'\in \frak{H}y \}, \\
 \frak{H}y \,\overline{\ast}\, \frak{H}y=\{ w \,\overline{\ast}\, w' \mid w,w'\in \frak{H}y \}. 
\end{align*}
Here, the harmonic product $\ast$ (resp.\ harmonic-star product $\overline{\ast}$) on $\frak{H}^{1}$ is defined by
\begin{align*}
 & 1\ast w= w\ast 1 = w , \,
 z_{k} w_{1} \ast z_{l} w_{2}= z_{k} (w_{1} \ast z_{l} w_{2}) + z_{l} (z_{k} w_{1} \ast w_{2}) + z_{k+l} (w_{1} \ast w_{2}),  \\
 &(resp.\ 
 1 \,\overline{\ast}\, w= w\,\overline{\ast}\, 1 = w , \,
 z_{k} w_{1} \,\overline{\ast}\, z_{l} w_{2}= z_{k} (w_{1} \,\overline{\ast}\, z_{l} w_{2}) + z_{l} (z_{k} w_{1} \,\overline{\ast}\, w_{2}) - z_{k+l} (w_{1} \,\overline{\ast}\, w_{2}) )
\end{align*}
($k,l \in \mathbb{Z}_{\ge1}$ and $w$, $w_{1}$, $w_{2}$ are words in $\frak{H}^{1}$), 
together with $\Q$-bilinearity. 
The harmonic product $\ast$ (resp.\ the harmonic-star product $\overline{\ast}$) is commutative and associative, therefore $\frak{H}^{1}$ is a $\Q$-commutative algebra with respect to $\ast$ (resp.\ $\overline{\ast}$). 
(See also Hoffman \cite{hoffman-97} and Muneta \cite{muneta_2009}.)

In the proofs, we use the linear part of the Kawashima's relation. 
We define the $\Q$-linear map $\mathit{Z}^\star\colon\frak{H}^0 \to \R$ by $\mathit{Z}^\star (1)=1$ and $\mathit{Z}^\star ( z_{k_1} \cdots z_{k_r})= \zeta^\star (k_1,\ldots, k_r)$. 
\begin{thm}[Kawashima's relation; Kawashima \cite{kawashima_2009}] \label{kawashima_lin}
 We have
 \begin{align*}
  L_x(\phi (\frak{H} y \ast \frak{H} y)) \subset \mathrm{ker} \mathit{Z}, \\
  L_x(\tilde{\alpha} (\frak{H} y \,\overline{\ast}\, \frak{H} y))\subset \mathrm{ker}\mathit{Z}^{\star}.
 \end{align*}
\end{thm}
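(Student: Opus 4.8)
This is Kawashima's relation \cite{kawashima_2009}; the plan is to reprove it along the lines of Kawashima's original argument, whose engine is the interaction between one-variable multiple polylogarithms and the harmonic product. For a word $w=z_{k_1}\cdots z_{k_r}\in\frak{H}^{1}$ put
\[
 \mathrm{Li}_w(z)=\sum_{n_1>\cdots>n_r\ge1}\frac{z^{n_1}}{n_1^{k_1}\cdots n_r^{k_r}},
 \qquad
 a_w(n)=\sum_{n\ge n_1>\cdots>n_r\ge1}\frac{1}{n_1^{k_1}\cdots n_r^{k_r}},
\]
so that $\sum_{n\ge1}a_w(n)z^n=\mathrm{Li}_w(z)/(1-z)$. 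First I would record three facts: (i)~a partial-fraction computation, interleaving the two ranges of summation, gives $a_{w_1\ast w_2}(n)=a_{w_1}(n)\,a_{w_2}(n)$, so $w\mapsto(a_w(n))_n$ is a $\Q$-algebra homomorphism from $(\frak{H}^{1},\ast)$ to sequences under pointwise product; (ii)~in the iterated-integral representation $L_x$ is integration against $dt/t$, i.e.\ $\mathrm{Li}_{xw}(z)=\int_0^z\mathrm{Li}_w(t)\,dt/t$; (iii)~$\mathit{Z}(v)=\lim_{z\to1^-}\mathrm{Li}_v(z)$ for $v\in x\frak{H}y$.

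Next I would transport $\phi$ into this picture. The change of variables $s=t/(t-1)$ sends $dt/t$ to $(1/s+1/(1-s))\,ds$ and $dt/(1-t)$ to $-ds/(1-s)$, that is, precisely the rule $x\mapsto x+y$, $y\mapsto-y$; hence, matching differential equations and vanishing at $z=0$, $\mathrm{Li}_{\phi(w)}(z)=\mathrm{Li}_w\!\bigl(z/(z-1)\bigr)$ (the Landen-type transformation), an identity of functions analytic on $\mathbb{C}\setminus[1,\infty)$. Equivalently, the binomial (Newton-series) transform $(\mathcal{T}f)(n)=\sum_{k=1}^{n}\binom{n}{k}(-1)^{k-1}f(k)$ satisfies $\sum_{n\ge1}(\mathcal{T}a_w)(n)\,z^n=-\mathrm{Li}_{\phi(w)}(z)$. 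Combining this with (i)--(iii), for $w_1,w_2\in\frak{H}y$ one obtains
\[
 \mathit{Z}\!\bigl(L_x\phi(w_1\ast w_2)\bigr)
 =\int_0^1\mathrm{Li}_{\phi(w_1\ast w_2)}(t)\,\frac{dt}{t}
 =-\sum_{n\ge1}\frac{1}{n}\,\bigl(\mathcal{T}(a_{w_1}a_{w_2})\bigr)(n),
\]
the left-hand integral converging since $L_x\phi(w_1\ast w_2)\in x\frak{H}y$ is admissible; the second inclusion of the theorem follows by running the same scheme with $\overline{\ast}$, $\mathrm{Li}^{\star}_w$, $\mathit{Z}^{\star}$ and $\tilde\alpha$ in place of $\ast$, $\mathrm{Li}_w$, $\mathit{Z}$ and $\phi$.

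This reduces the theorem to the sum identity $\sum_{n\ge1}\frac{1}{n}\bigl(\mathcal{T}(a_{w_1}a_{w_2})\bigr)(n)=0$ for all $w_1,w_2\in\frak{H}y$, which is the crux. The product structure is essential: with a single $a_v$ in place of the product the value is generally nonzero (for $v=z_1$ it equals $\zeta(2)$, matching $\mathit{Z}(L_x\phi(z_1))=-\zeta(2)$), whereas the identity already encodes $\zeta(3)=\zeta(2,1)$ when $w_1=w_2=z_1$, and the weight-$4$ sum formula $\zeta(3,1)+\zeta(2,2)=\zeta(4)$ when $w_1=z_1z_1$, $w_2=z_1$. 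To prove it I would expand $\mathcal{T}(a_{w_1}a_{w_2})(n)=\mathcal{T}(a_{w_1\ast w_2})(n)$ via the closed form $\sum_{k=\ell}^{n}\binom{n}{k}(-1)^{k-1}=(-1)^{\ell-1}\binom{n-1}{\ell-1}$ for the outermost index, recognise the outcome as the $z^n$-coefficient of $-\mathrm{Li}_{w_1\ast w_2}\!\bigl(z/(z-1)\bigr)$, write $\mathrm{Li}_{w_1\ast w_2}(z)/(1-z)=\sum_{n\ge1}a_{w_1}(n)a_{w_2}(n)z^n$ as a Hadamard product of $\mathrm{Li}_{w_1}(z)/(1-z)$ and $\mathrm{Li}_{w_2}(z)/(1-z)$, and then manipulate the resulting contour integral so that the $z\to1^-$ limit telescopes.

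The algebra above --- partial fractions, the change of variables, Landen's identity --- is routine; the real obstacle is the analysis in this last step. The double sum $\sum_n\frac{1}{n}\sum_k\binom{n}{k}(\cdots)$ converges only conditionally, so the rearrangements that produce the cancellation must be justified carefully: either by staying at $z<1$ throughout and passing to the limit $z\to1^-$ only at the very end, using analyticity of $\mathrm{Li}_w$ on $\mathbb{C}\setminus[1,\infty)$ together with uniform control of the $\log(1-z)$-type growth near $z=1$, or by truncating the Newton series at level $N$ with explicit error bounds (Kawashima's route). Pinning down the precise combinatorial identity behind the cancellation, and controlling that limit, is where essentially all the work lies; everything else is formal.
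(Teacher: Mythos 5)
Before assessing the argument itself: the paper does not prove this statement at all --- Theorem \ref{kawashima_lin} is imported verbatim from Kawashima \cite{kawashima_2009} and used as a black box in the later sections --- so there is no internal proof to compare against; you are attempting to reprove Kawashima's theorem itself.

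Your setup is sound and does follow the skeleton of Kawashima's route: the homomorphism property $a_{w_1\ast w_2}(n)=a_{w_1}(n)a_{w_2}(n)$, the realization of $\phi$ by the substitution $t\mapsto t/(t-1)$ on the iterated-integral side, and the generating-function identity $\sum_{n\ge1}(\mathcal{T}a_w)(n)z^n=-\mathrm{Li}_{\phi(w)}(z)$ are all correct (your checks for $z_1$ and the sum formula are consistent with them). But the statement you reduce to, $\sum_{n\ge1}\frac{1}{n}\bigl(\mathcal{T}(a_{w_1}a_{w_2})\bigr)(n)=0$, \emph{is} the linear part of Kawashima's relation, merely rewritten in the language of binomial transforms; and at exactly this point the proposal stops being a proof. ``Manipulate the resulting contour integral so that the $z\to1^{-}$ limit telescopes'' names no mechanism: the transform $\mathcal{T}$ does not interact with pointwise products of sequences in any simple way --- that incompatibility is precisely the difficulty --- and the cancellation is not a telescoping phenomenon. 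In Kawashima's paper the vanishing rests on a genuinely new input: he interpolates the multiple harmonic sums $a_w(n)$ by Newton series of the shape $\sum_{n\ge1}(-1)^{n-1}\binom{s}{n}a_w(n)$, proves convergence and analyticity in a half-plane, establishes a product formula for these interpolating functions compatible with $\ast$, and extracts the relations from the expansion of a product of two of them. None of that apparatus appears in your sketch, and you candidly flag that ``essentially all the work lies'' in the missing step --- an accurate self-assessment, since what remains is not a technicality about conditional convergence but the theorem's core content. The same gap recurs for the starred inclusion, which you dispatch by analogy. As written, this is an accurate road map to Kawashima's proof, not a proof.
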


\subsection{The first proof}
Recall $z=x+y$.
We define an automorphism $S_1$ of $\frak{H}$ by $S_1(x)=x, S_1(y)=z$, and 
$S$ the $\Q$-linear map of $\frak{H} y$ satisfying $S(1)=1$ and $S(wy) = S_1(w)y \,\, (w\in\frak{H})$.
We also set an automorphism $\tilde{S}_1$ of $\frak{H}$ by $\tilde{S}_1(x)=x, \tilde{S}_1(y)=y-x$, and 
$\tilde{S}$ the $\Q$-linear map of $\frak{H} y$ satisfying $\tilde{S}(1)=1$ and $\tilde{S}(wy) = \tilde{S}_1(w)y \,\, (w\in\frak{H})$.
We note that 
\begin{align*}
 & S \circ \tilde{S} = \tilde{S} \circ S = id., \\
 & \mathit{Z}^\star = \mathit{Z} \circ S, \quad \mathit{Z}= \mathit{Z}^\star \circ \tilde{S}.　
\end{align*}
Ihara-Kajikawa-Ohno-Okuda \cite{ihara_kajikawa_ohno_okuda_2011} shows the equivalence of the following Theorem \ref{ikoo} and the derivation relation for MZVs (Theorem \ref{der_MZVs}). 
To prove Theorem \ref{der_MZVs}, we show Theorem \ref{ikoo} instead. 
\begin{thm}[Ihara-Kajikawa-Ohno-Okuda \cite{ihara_kajikawa_ohno_okuda_2011}] \label{ikoo}
For $l \in \Z_{\ge1}$, we have
 \begin{align*}
  \mathit{Z}^{\star} ( \tilde{S} \partial_l S (w) )=0 \quad (w \in \frak{H}^0). 
 \end{align*}
\end{thm}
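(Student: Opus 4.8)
The plan is to deduce Theorem~\ref{ikoo} from the linear part of Kawashima's relation (Theorem~\ref{kawashima_lin}), and in particular from its harmonic-star version $L_x(\tilde{\alpha}(\frak{H}y\,\overline{\ast}\,\frak{H}y))\subset\ker\mathit{Z}^{\star}$. The strategy is purely algebraic: rewrite $\tilde{S}\partial_l S$ as (a component of) something of the shape $\tilde{\alpha}(w'\,\overline{\ast}\,w'')$ with $w'\in\frak{H}y$ a fixed word depending on $l$, so that applying $\mathit{Z}^{\star}$ kills it. Concretely, one expects that for $w\in\frak{H}^0$ one has an identity of the form $\tilde{S}\partial_l S(w)=\tilde{\alpha}\big(z_l\,\overline{\ast}\,\beta(w)\big)$ (up to an application of $L_x^{-1}$ and a bit of bookkeeping), where $\beta$ is some explicit $\Q$-linear isomorphism of $\frak{H}y$; the pieces $\tau$, $\alpha$, $\tilde\alpha$, $S_1$, $\tilde S_1$, $\phi$ are all built from the same swap $x\leftrightarrow y$ and the substitution $y\mapsto y\pm x$, so the required operator identity should reduce to a short computation with automorphisms of $\frak{H}$.

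First I would record the basic commutation and conjugation relations among the maps in play: how $S_1,\tilde S_1$ interact with $\partial_l$, with $L_x$, and with $\tilde\alpha$; in particular I expect $\partial_l$ to be expressible via left/right multiplication operators after conjugating by $S_1$, using $\partial_l(x)=xz^{l-1}y=x\phi(z_l)$-type formulas and the fact that $\partial_l(z)=0$. Second, I would identify the single ``seed'' word: since $\partial_l$ raises degree by $l$, the factor $z_l$ (or its image under $\tilde\alpha$, namely $\tilde\alpha(z_l)=x^{l-1}y\mapsto$ the appropriate word) must be the left argument of the harmonic-star product. Third, I would verify by induction on the word length of $w$ that $\partial_l$, transported through $S$ and $\tilde S$, acts exactly as the ``stuffle with $z_l$'' derivation on the transported word; the Leibniz rule for $\partial_l$ matches the recursive definition of $\overline{\ast}$ term by term once the correct dictionary is fixed, the minus sign in $z_{k+l}$ accounting for the minus sign in $\partial_l(y)=-xz^{l-1}y$. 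Finally, apply $\mathit{Z}^{\star}$ and invoke Theorem~\ref{kawashima_lin} to conclude $\mathit{Z}^{\star}(\tilde S\partial_l S(w))=0$, hence, by the equivalence established in \cite{ihara_kajikawa_ohno_okuda_2011}, Theorem~\ref{der_MZVs}.

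The main obstacle will be pinning down the precise operator identity linking $\tilde{S}\partial_l S$ to the harmonic-star product with $z_l$: one must keep careful track of the $L_x$/$L_x^{-1}$ bookkeeping (the derivation relation and the Kawashima relation live on $\frak{H}^0$ and $\frak{H}y$ respectively, which differ by left-multiplication by $x$), of the distinction between the automorphism $\alpha$ and the linear map $\tilde\alpha$, and of where exactly the substitution $y\mapsto y-x$ versus $y\mapsto x+y$ enters. It is plausible that the cleanest route replaces $\partial_l$ at the outset using the known formula expressing it through $\tilde\alpha$, $\sigma_m$-type maps, or the operator $\phi$, turning the whole claim into ``$\phi(\frak{H}y\ast\frak{H}y)$ contains the relevant element'' and then transporting across $S\leftrightarrow\tilde S$ using $\mathit{Z}^{\star}=\mathit{Z}\circ S$; checking that this transport sends $\ast$ to $\overline{\ast}$ compatibly is the delicate point, but it is a finite, mechanical verification rather than a conceptual difficulty.
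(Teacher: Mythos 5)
Your plan is essentially the paper's own proof: the paper computes directly that for $w=xy^{k_1-1}\cdots xy^{k_r-1}y$ one has $\tilde{S}\partial_l S(w)=-L_x\tilde{\alpha}(z_{k_1}\cdots z_{k_r}\,\overline{\ast}\,z_l)$, which is exactly the operator identity you conjecture (the sign being the ``bookkeeping'' you allow for), and then concludes by the harmonic-star part of Theorem~\ref{kawashima_lin}. The ``finite, mechanical verification'' you defer is precisely the displayed calculation in the paper, with the minus sign in $\partial_l(y)=-xz^{l-1}y$ matching the $-z_{k+l}$ term of $\overline{\ast}$ as you predict.
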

\begin{proof}
 For $w=xy^{k_1-1} \cdots xy^{k_r-1} y \,\,(k_1,\dots,k_r\ge1)$, we have
 \begin{align*}
  \tilde{S} \partial_l S (w) 
  &= \tilde{S} \partial_l (xz^{k_1-1}\dots xz^{k_r-1}y) \\
  &= \tilde{S} (xz^{l-1}yz^{k_1-1}xz^{k_2-1}\dots xz^{k_r-1}y +\cdots\cdots \\
   &\quad\quad +xz^{k_1-1}\cdots xz^{k_{r-1}-1} xz^{l-1}y z^{k_r-1}y - xz^{k_1-1}\cdots xz^{k_r-1} xz^{l-1}y) \\ 
  &= xy^{l-1}(y-x)y^{k_1-1}xy^{k_2-1}\cdots xy^{k_r} +\cdots\cdots \\
   &\quad +xy^{k_1-1}\cdots xy^{k_{r-1}-1} xy^{l-1}(y-x) y^{k_r} - xy^{k_1-1}\cdots xy^{k_r-1} xy^{l} \\ 
  &= L_x\tilde{\alpha} (x^{l-1}(x-y)x^{k_1-1}y \cdots x^{k_r-1}y +\cdots\cdots \\
   &\quad\qquad\, +x^{k_1-1}y\cdots x^{k_{r-1}-1} yx^{l-1}(x-y) x^{k_r-1}y - x^{k_1-1}y\cdots x^{k_r-1} yx^{l-1}y) \\ 
  &= -L_x\tilde{\alpha} (z_{k_1}\cdots z_{k_r} \,\overline{\ast}\, z_{l}).
 \end{align*}
 By Theorem \ref{kawashima_lin}, we have the desired result.
\end{proof}

\subsection{The second proof}
We prove the following theorem instead of Theorem \ref{der_MZVs}. 
\begin{thm} \label{der_MZVs_ext} 
 For $(m_1,\ldots ,m_n)\in\Z_{\ge1}^{n} \,\, (n\ge 0)$ and $l\in\Z_{\ge1}$, we have
 \begin{align*}
  &\mathit{Z} (xz^{m_1-1}y \cdots z^{m_n-1}y \partial_{l}(w)) \\
  &=-\mathit{Z} (xz^{l-1}y z^{m_{1}-1}y \cdots z^{m_n-1}yw) \\
  &\,\,\,\,\, +\sum_{i=1}^{n} \mathit{Z} (xz^{m_1-1}y \cdots z^{m_i-1}x z^{l-1}y z^{m_{i+1}-1}y \cdots z^{m_n-1}yw) \quad (w \in \frak{H}y). 
 \end{align*}
 When $n=0$, we understand the left-hand side is $\mathit{Z} (x\partial_{l}(w))$ and the right-hand side is $-\mathit{Z} (xz^{l-1}yw)$. 
\end{thm}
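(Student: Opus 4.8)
The plan is to prove Theorem \ref{der_MZVs_ext} by induction on $n$, using the identity for the derivation $\partial_l$ together with the Leibniz rule, and to reduce to the known derivation relation for MZVs (Theorem \ref{der_MZVs}). The base case $n=0$ is essentially the content of Theorem \ref{der_MZVs}: since $x\partial_l(w) = \partial_l(xw) - \partial_l(x)w = \partial_l(xw) - xz^{l-1}yw$ and $xw \in \frak{H}^0$ when $w \in \frak{H}y$, applying $\mathit{Z}$ and using $\mathit{Z}(\partial_l(xw))=0$ gives exactly $\mathit{Z}(x\partial_l(w)) = -\mathit{Z}(xz^{l-1}yw)$.

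For the inductive step, the key observation is that $\partial_l$ is a derivation, so one can move it across the letters $z^{m_i-1}y$ one at a time. Concretely, I would write
\begin{align*}
 xz^{m_1-1}y\cdots z^{m_n-1}y\,\partial_l(w)
 &= \partial_l\bigl(xz^{m_1-1}y\cdots z^{m_n-1}y\,w\bigr) - \partial_l\bigl(xz^{m_1-1}y\cdots z^{m_n-1}y\bigr)w.
\end{align*}
The first term on the right lies in $\partial_l(\frak{H}^0)$ (note $xz^{m_1-1}y\cdots z^{m_n-1}yw \in \frak{H}^0$), hence is killed by $\mathit{Z}$ by Theorem \ref{der_MZVs}. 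So the whole problem reduces to computing $\partial_l(xz^{m_1-1}y\cdots z^{m_n-1}y)$ explicitly via the Leibniz rule. Here I use that $\partial_l(z)=0$ and $\partial_l(1)=0$, so the only surviving contributions come from differentiating the leading $x$ (giving $\partial_l(x) = xz^{l-1}y$ inserted at the front, i.e.\ the term $xz^{l-1}y\,z^{m_1-1}y\cdots z^{m_n-1}y$) and from differentiating each of the $n$ factors $y$ in $z^{m_i-1}y$ (each $\partial_l(y) = -xz^{l-1}y$, producing the sign and the insertion of $xz^{l-1}y$ after the corresponding block $z^{m_i-1}$). Summing these $n+1$ contributions and multiplying on the right by $w$ yields precisely the claimed right-hand side.

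The main obstacle — really a bookkeeping matter rather than a conceptual one — is carefully verifying that differentiating the factor $y$ in $z^{m_i-1}y = z^{m_i-1}y$ produces the term $xz^{m_1-1}y\cdots z^{m_i-1}x z^{l-1}y z^{m_{i+1}-1}y\cdots z^{m_n-1}y$ with the stated placement of the inserted $xz^{l-1}y$, and that the leading-$x$ contribution lands in the right slot. One must also double-check that every word $xz^{m_1-1}y\cdots z^{m_i-1}xz^{l-1}yz^{m_{i+1}-1}y\cdots z^{m_n-1}yw$ and the word $xz^{l-1}yz^{m_1-1}y\cdots z^{m_n-1}yw$ genuinely lie in $\frak{H}^0 y \subseteq \frak{H}$ so that $\mathit{Z}$ is defined on them (they do, since each begins with $x$, ends in $y$, and $w\in\frak{H}y$). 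Once the Leibniz computation of $\partial_l(xz^{m_1-1}y\cdots z^{m_n-1}y)$ is written out, applying $\mathit{Z}$ and invoking Theorem \ref{der_MZVs} finishes the proof; no induction is strictly needed since the derivation computation handles all $n$ simultaneously, but phrasing it as a direct Leibniz expansion is cleanest.
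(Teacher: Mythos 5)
Your Leibniz computation is correct as algebra: since $\partial_l(z)=0$, expanding $\partial_l(xz^{m_1-1}y\cdots z^{m_n-1}y)$ does give exactly the $n+1$ terms you describe, and writing $xz^{m_1-1}y\cdots z^{m_n-1}y\,\partial_l(w)=\partial_l(xz^{m_1-1}y\cdots z^{m_n-1}yw)-\partial_l(xz^{m_1-1}y\cdots z^{m_n-1}y)w$ and killing the first term with Theorem \ref{der_MZVs} yields the stated identity. The problem is not the computation but the logical role you assign to Theorem \ref{der_MZVs}. This statement sits in the section ``Two alternative proofs of derivation relation for MZVs'': Theorem \ref{der_MZVs_ext} is introduced precisely in order to \emph{prove} Theorem \ref{der_MZVs} (its $n=0$ case is equation (\ref{der_MZVs_ver2}), which is the derivation relation). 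Deducing Theorem \ref{der_MZVs_ext} \emph{from} Theorem \ref{der_MZVs} is therefore circular in context; indeed, what you have written is essentially the content of the Remark that follows the theorem in the paper, which records the equivalence of the two statements but is explicitly not offered as the proof.

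The paper's actual proof is independent of Theorem \ref{der_MZVs}. It reduces to words $w=z^{k_1-1}y\cdots z^{k_r-1}y$ and inducts on the length $r$. The base case $r=1$ is where the real input lies: after applying $\phi$ (the automorphism with $\phi(x)=z$, $\phi(y)=-y$), the alternating sum of the $n+2$ words appearing in the identity is recognized as $(-1)^{n+1}L_x\phi\bigl(x^{m_1-1}y\cdots x^{m_n-1}yx^{k_1-1}y \ast x^{l-1}y\bigr)$, which is annihilated by $\mathit{Z}$ by the linear part of Kawashima's relation (Theorem \ref{kawashima_lin}). The inductive step then peels off one factor $z^{k_1-1}y$ from $w$ using $\partial_l(z^{k_1-1}yw')=-z^{k_1-1}xz^{l-1}yw'+z^{k_1-1}y\partial_l(w')$. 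If you want to salvage your argument you must replace the appeal to Theorem \ref{der_MZVs} by an independent proof of $\mathit{Z}(\partial_l(\frak{H}^0))=0$, at which point you have not produced an alternative proof of anything; otherwise you need to supply the harmonic-product identification and the Kawashima input that the paper uses.
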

\begin{rem}
 We note that Theorem \ref{der_MZVs} is essentially equivalent to Theorem \ref{der_MZVs_ext}. 
 Since $\partial_{l} (xw')=xz^{l-1}yw'+x\partial_{l} (w')$ for $w'\in\frak{H}y$, 
 we see that the statement of Theorem \ref{der_MZVs} can be replaced to
 \begin{eqnarray}  
  \mathit{Z} (x\partial_{l}(w')) = -\mathit{Z} (xz^{l-1}yw') \quad (w'\in\frak{H}y). 
  \label{der_MZVs_ver2}
 \end{eqnarray}
 Thus, Theorem \ref{der_MZVs_ext} (the case $n=0$) implies Theorem \ref{der_MZVs}.
 On the other hand, since 
 \begin{align*} 
  \partial_{l}(z^{m_1-1}y\cdots z^{m_n-1}yw) =&-z^{m_1-1}xz^{l-1}yz^{m_2-1}y\cdots z^{m_n-1}yw \\ 
  &-\quad\cdots\cdots \\
  &-z^{m_1-1}y\cdots z^{m_n-1}xz^{l-1}yw \\ 
  &+z^{m_1-1}y\cdots z^{m_n-1}y\partial_{l}(w)   
 \end{align*}
 (note $\partial_{l}(z)=0$), we get Theorem \ref{der_MZVs_ext} by putting $z^{m_1-1}y\cdots z^{m_n-1}yw$
 for $w'$ in eq.$(\ref{der_MZVs_ver2})$. 
\end{rem}

\begin{proof}[Proof of Theorem \ref{der_MZVs_ext}]
We notice that any word in $\frak{H}y$ can be written by the linear combination of the words $z^{k_1-1}y\cdots z^{k_r-1}y$, e.g., $yxy=yzy-y^3$. 
Thus, we need to prove the theorem only for the word of the form $w=z^{k_1-1}y\cdots z^{k_r-1}y $.

For $w=z^{k_1-1}y\cdots z^{k_r-1}y $, we call $r$ the length of $w$. 
We prove the theorem by induction on the length of $w=z^{k_1-1}y\cdots z^{k_r-1}y$. 

\noindent (I) When $w=z^{k_1-1}y$, we need to show 
\begin{align*}
 -\mathit{Z} ( xz^{m_{1}-1}y \cdots z^{m_n-1}y z^{k_1-1} xz^{l-1}y)
 =&-\mathit{Z} (xz^{l-1}y z^{m_{1}-1}y \cdots z^{m_n-1}y z^{k_1-1}y) \\
 &+\sum_{i=1}^{n} \mathit{Z} (xz^{m_1-1}y \cdots z^{m_i-1}x z^{l-1}y z^{m_{i+1}-1}y \cdots z^{m_n-1}y z^{k_1-1}y).
\end{align*}
We see by Theorem \ref{kawashima_lin}, 
\begin{align*}
 &\mathit{Z} ( xz^{m_{1}-1}y \cdots z^{m_n-1}y z^{k_1-1} xz^{l-1}y) -\mathit{Z} (xz^{l-1}y z^{m_{1}-1}y \cdots z^{m_n-1}y z^{k_1-1}y) \\
 &+\sum_{i=1}^{n} \mathit{Z} (xz^{m_1-1}y \cdots z^{m_i-1}x z^{l-1}y z^{m_{i+1}-1}y  \cdots z^{m_n-1}y z^{k_1-1}y) \\
 &=\mathit{Z} (-xz^{l-1}yz^{m_1-1}y\cdots z^{m_n-1}y z^{k_1-1}y +xz^{m_1-1}xz^{l-1}yz^{m_2-1}y\cdots z^{m_n-1}y z^{k_1-1}y \\
 &\quad\quad\,+\quad\cdots\cdots\quad +xz^{m_1-1}y\cdots z^{m_n-1}xz^{l-1}y z^{k_1-1}y +xz^{m_{1}-1}y \cdots z^{m_n-1}y z^{k_1-1} xz^{l-1}y ) \\
 &=(-1)^{n+1} \mathit{Z} (L_x \phi (x^{l-1}yx^{m_1-1}y\cdots x^{m_n-1}y x^{k_1-1}y
  +x^{m_1-1}zx^{l-1}yx^{m_2-1}y\cdots x^{m_n-1}y x^{k_1-1}y \\
 &\qquad\quad\quad\quad\,\,\,\, +\quad\cdots\cdots\quad
  +x^{m_1-1}y\cdots x^{m_n-1}zx^{l-1}y x^{k_1-1}y +x^{m_{1}-1}y \cdots x^{m_n-1}y x^{k_1-1} zx^{l-1}y)) \\ 
 &=(-1)^{n+1} \mathit{Z} (L_x \phi (x^{m_1-1}y\cdots x^{m_n-1}y x^{k_1-1}y \ast x^{l-1}y))=0. 
\end{align*}
Here, we note that the proof is also valid when $n=0$.
 
\noindent (II) We assume the identity holds for $r-1$. 
 For $w=z^{k_1-1}yw' \,(w'=z^{k_2-1}y\cdots z^{k_r-1}y)$,  
\begin{align*}
 \textrm{L.H.S.}&=\mathit{Z} (xz^{m_1-1}y\cdots z^{m_n-1}y\partial_{l}(z^{k_1-1}yw')) \\
 &=\mathit{Z} (-xz^{m_1-1}y\cdots z^{m_n-1}yz^{k_1-1}xz^{l-1}yw' +xz^{m_1-1}y\cdots z^{m_n-1}yz^{k_1-1}y\partial_{l}(w')). 
\end{align*}
By the induction hypothesis, we have
\begin{align*}
 \mathit{Z} (xz^{m_1-1}y\cdots z^{m_n-1}yz^{k_1-1}y\partial_{l}(w')) &=\mathit{Z} (-xz^{l-1}yz^{m_1-1}y\cdots z^{m_n-1}yz^{k_1-1}yw' \\
 &\quad\quad\, +xz^{m_1-1}xz^{l-1}yz^{m_2-1}y\cdots z^{m_n-1}yz^{k_1-1}yw' \\ 
 &\quad\quad\, +\quad\cdots\cdots \\
 &\quad\quad\, +xz^{m_1-1}y\cdots z^{m_n-1}yz^{k_1-1}xz^{l-1}yw'). 
\end{align*}
Thus, we find
\begin{align*}
 \textrm{L.H.S.}&=\mathit{Z} (-xz^{m_1-1}y\cdots z^{m_n-1}yz^{k_1-1}xz^{l-1}yw' -xz^{l-1}yz^{m_1-1}y\cdots z^{m_n-1}yz^{k_1-1}yw' \\
 &\quad\quad\, +xz^{m_1-1}xz^{l-1}yz^{m_2-1}y\cdots z^{m_n-1}yz^{k_1-1}yw' +\quad\cdots\cdots \\
 &\quad\quad\, +xz^{m_1-1}y\cdots z^{m_n-1}xz^{l-1}yz^{k_1-1}yw' +xz^{m_1-1}y\cdots z^{m_n-1}yz^{k_1-1}xz^{l-1}yw') \\
 &=\mathit{Z} (-xz^{l-1}yz^{m_1-1}y\cdots z^{m_n-1}yz^{k_1-1}yw' +xz^{m_1-1}xz^{l-1}yz^{m_2-1}y\cdots z^{m_n-1}yz^{k_1-1}yw' \\ 
 &\quad\quad\, +\quad\cdots\cdots\quad +xz^{m_1-1}y\cdots z^{m_n-1}xz^{l-1}yz^{k_1-1}yw') \\
 &=\textrm{R.H.S.} \qedhere
\end{align*}
\end{proof}

\section{Three alternative proofs of derivation relation for FMZVs}
In this section, we present three different proofs of Theorem \ref{der_FMZVs}. 
The following theorem gives the important properties of FMZVs.
The first equality for $\A$-FMZVs was proved by Hoffman \cite{hoffman_2015} and the others were obtained by Kaneko-Zagier \cite{kaneko_2014,kaneko_zagier_2017}.  
\begin{thm}[Hoffman \cite{hoffman_2015}, Kaneko-Zagier \cite{kaneko_2014,kaneko_zagier_2017}] \label{dshF}
 For $w=z_{k_1}\cdots z_{k_r}$ and $w'=z_{k'_1}\cdots z_{k'_s}\in\frak{H}^{1}$, we have
 \begin{align*}
  &\ZF (w\ast w')=\ZF(w)\ZF(w'), \\
  &\ZF (w\sh w')=(-1)^{|w|}\ZF(z_{k_r}\cdots z_{k_1}z_{k'_1}\cdots z_{k'_s}),
 \end{align*}
 where $|w|$ denotes the degree of the word $w$ in $x$ and $y$.
\end{thm}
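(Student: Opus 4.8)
The plan is to establish the two displayed identities separately, since they are of genuinely different characters. Both are $\Q$-bilinear in $(w,w')$, so throughout one may assume $w=z_{k_1}\cdots z_{k_r}$ and $w'=z_{k'_1}\cdots z_{k'_s}$ are words; the assertions then become identities among truncated multiple harmonic sums (for $\F=\A$) or among harmonic-regularized real multiple zeta values (for $\F=\mathcal{S}$).

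For the first (harmonic product) identity with $\F=\A$: fix a prime $p$ and write $H_p(\k)=\sum_{p>n_1>\cdots>n_r\ge1}n_1^{-k_1}\cdots n_r^{-k_r}\in\Z/p\Z$ for the $p$-component of $\zeta_{\A}(\k)$. Expanding $H_p(\k)\,H_p(\k')$ and partitioning the running indices $(n_i)$ and $(n'_j)$ according to how the two strings interleave — with coincidences $n_i=n'_j$ producing the $z_{k_i+k'_j}$ terms — reproduces exactly $H_p(\k\ast\k')$; this is an identity of rational numbers, valid in every $\Z/p\Z$, hence in $\A$, which is Hoffman's argument \cite{hoffman_2015}. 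For $\F=\mathcal{S}$ I would start from the harmonic regularization theorem of Ihara-Kaneko-Zagier \cite{ihara_kaneko_zagier_2006}, giving $\zeta^{\ast}(\k)\zeta^{\ast}(\k')=\zeta^{\ast}(\k\ast\k')$, substitute it into the defining expression $\zeta_{\mathcal{S}}^{\ast}(\k)=\sum_{i=0}^{r}(-1)^{k_1+\cdots+k_i}\zeta^{\ast}(k_i,\dots,k_1)\zeta^{\ast}(k_{i+1},\dots,k_r)$, and reorganize the resulting product of four $\zeta^{\ast}$-factors; the combinatorial heart is a bijection between the cut points of a word occurring in $\k\ast\k'$ and pairs consisting of a cut point of $\k$ and one of $\k'$, after which the alternating signs match and one descends to the quotient by $(\zeta(2))$.

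The second (shuffle-type) identity is where I expect the real work; here $\sh$ is the shuffle product and the right-hand side involves the reversal $R$. For $\F=\A$ the basic input is Hoffman's reversal relation $\zeta_{\A}(k_1,\dots,k_r)=(-1)^{k_1+\cdots+k_r}\zeta_{\A}(k_r,\dots,k_1)$, which comes from the substitution $n_i\mapsto p-n_i$ in $H_p(\k)$: this turns a strictly decreasing string into a strictly increasing one and replaces each $n_i^{-k_i}$ by $(-1)^{k_i}n_i^{-k_i}$ modulo $p$. Applying this to the ``$w$-half'' only and using the harmonic product, one is reduced to a purely combinatorial ``finite shuffle'' statement: that $\zeta_{\A}$ of $w\sh w'$ telescopes modulo $p$ to $(-1)^{|w|}\zeta_{\A}\bigl(z_{k_r}\cdots z_{k_1}z_{k'_1}\cdots z_{k'_s}\bigr)$, which I would prove by grouping the $\binom{r+s}{r}$ shuffle terms according to the largest position at which the $w'$-indices overtake the $w$-indices and displaying the resulting cancellation (alternatively, via Hoffman's quasi-symmetric-function computation). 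For $\F=\mathcal{S}$ I would mirror this over $\R$: the defining formula for $\zeta_{\mathcal{S}}^{\ast}$ already carries the reversed strings $\zeta^{\ast}(k_i,\dots,k_1)$, so combining the classical shuffle product of convergent MZVs with the same interleaving bookkeeping — and, at the non-admissible places, the comparison of shuffle- and harmonic-regularizations, which collapses modulo $(\zeta(2))$ — should yield the identity for $\zeta_{\mathcal{S}}$, following Kaneko-Zagier \cite{kaneko_zagier_2017}.

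The main obstacle is the shuffle-type identity for $\F=\A$. Unlike the harmonic product it is \emph{not} an identity of rational numbers at a single prime, so the symmetry $n\mapsto p-n$ must be used in an essential way, and matching the combinatorics of the shuffle product against the single ``reverse-then-concatenate'' word $R(w)w'$ — while correctly tracking the signs produced by complementation — is delicate; the $\mathcal{S}$-case inherits the same difficulty through the interaction of the alternating sum defining $\zeta_{\mathcal{S}}^{\ast}$ with the shuffle combinatorics and with the regularization comparison.
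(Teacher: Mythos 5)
First, a point of calibration: the paper contains no proof of Theorem \ref{dshF} to compare yours against. It is imported as a black box from Hoffman \cite{hoffman_2015} (first identity, $\A$-case) and Kaneko--Zagier \cite{kaneko_2014,kaneko_zagier_2017} (the rest), so your attempt can only be judged on its own terms. The harmonic-product half of your outline is sound: for $\F=\A$ the stuffle decomposition of $H_p(\k)H_p(\k')$ is an exact identity of sums truncated at the common bound $p$, valid in each $\Z/p\Z$; and for $\F=\mathcal{S}$ your ``bijection of cut points'' is exactly the compatibility of the harmonic product with the deconcatenation coproduct, which is how the multiplicativity of $\zeta_{\mathcal{S}}^{\ast}$ is established in the literature.

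The shuffle-type identity is where your write-up has a genuine gap. Your first move is right: substituting $n\mapsto p-n$ in the $w$-variables of $\zeta_{\mathcal{A}}(k_r,\dots,k_1,k'_1,\dots,k'_s)$ converts $(-1)^{|w|}\ZA(z_{k_r}\cdots z_{k_1}z_{k'_1}\cdots z_{k'_s})$ into a double sum over two independent decreasing strings coupled only by the condition $m_1+n_1<p$. But the statement you then need --- that this diagonally truncated double sum equals the sum of the truncated harmonic sums attached to the words occurring in $w\sh w'$ --- is the entire content of the theorem, and your proposed argument (``group the shuffle terms by the largest position at which the $w'$-indices overtake the $w$-indices'') restates the goal rather than exhibiting the required bijection or induction; the phrase ``using the harmonic product'' plays no identifiable role here. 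Worse, the count $\binom{r+s}{r}$ reveals a confusion between shuffling the blocks $z_{k_i}$ and shuffling the letters $x,y$: the product $\sh$ is the letter-wise shuffle on $\frak{H}$, with $\binom{|w|+|w'|}{|w|}$ terms, and the multiplicities matter. For example $z_1\sh z_2=z_1z_2+2z_2z_1$, so the claimed identity reads $2\zeta_{\mathcal{A}}(1,2)+2\zeta_{\mathcal{A}}(2,1)=0$, consistent with the stuffle relation; the block shuffle $z_1z_2+z_2z_1$ would instead force $\zeta_{\mathcal{A}}(1,2)=0$, which is not a known identity (it amounts to the vanishing of $(B_{p-3}\bmod p)_p$ in $\A$). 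The $\mathcal{S}$-case inherits the same missing combinatorial lemma through the shuffle-regularized variant of $\zeta_{\mathcal{S}}^{\ast}$. In short: correct architecture, but the central lemma of the hard half is asserted, not proved, and the one concrete combinatorial claim you do make is miscounted.
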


In the proofs, we use the following relations, which are called the duality formulas for FMZ(S)Vs. 
\begin{lem} [Hoffman \cite{hoffman_2015}, Jarossay \cite{jarossay_2014}] \label{dualF}
 For $w \in\frak{H}^{1}$, we have 
 \begin{align*}
  \ZF (w)&=\ZF (\phi (w)), \\
  \ZF^{\star} (w)&=-\ZF^{\star} (\tilde{\alpha} (w)).
 \end{align*} 
 Here, the $\Q$-linear map $\ZF^\star\colon\frak{H}^1 \to \A \,\text{or}\, \mathcal {Z}_{\R}/(\zeta(2))$ is defined by $\ZF^\star (1)=1$ and $\ZF^\star ( z_{k_1} \cdots z_{k_r})= \zeta_\mathcal{F}^\star (k_1,\ldots, k_r)$. 
\end{lem}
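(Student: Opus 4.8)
The plan is to deduce both identities from the known structure of the algebra maps $\ZF$ together with the basic facts recorded in Theorem \ref{dshF}, rather than re-deriving the duality from scratch for each of $\mathcal{A}$ and $\mathcal{S}$. For the first identity, $\ZF(w)=\ZF(\phi(w))$, the key observation is that $\phi$ is the automorphism of $\frak{H}$ sending $x\mapsto z=x+y$ and $y\mapsto -y$; on a word $w=z_{k_1}\cdots z_{k_r}=x^{k_1-1}y\cdots x^{k_r-1}y$ one computes $\phi(w)=(x+y)^{k_1-1}(-y)\cdots(x+y)^{k_r-1}(-y)$, which expands into an alternating sum of words. I would check, on the level of the stuffle/shuffle-type combinatorics, that $\phi$ coincides (up to the sign bookkeeping $(-1)^{|w|}$) with the composite of the reversal map $R$ and the $x\leftrightarrow y$ interchange $\tau$, i.e.\ that $\phi = \tau\circ(\text{reversal})$ in the appropriate sense. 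Then the second line of Theorem \ref{dshF}, $\ZF(w\sh w')=(-1)^{|w|}\ZF(z_{k_r}\cdots z_{k_1}z_{k'_1}\cdots z_{k'_s})$, applied with $w'=1$ (or with $w'$ trivial) gives exactly the relation $\ZF(w)=\ZF(\phi(w))$ after matching $\phi$ with the reversal-and-interchange operation; alternatively one invokes the known duality for $\A$-FMZVs from Hoffman \cite{hoffman_2015} and the corresponding statement for SMZVs from Jarossay \cite{jarossay_2014} directly, since the lemma explicitly attributes the result to those papers.

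For the second identity, $\ZF^{\star}(w)=-\ZF^{\star}(\tilde{\alpha}(w))$, I would first express $\ZF^{\star}$ in terms of $\ZF$ via the standard change between $\star$-values and ordinary values: $\zeta_\F^\star(k_1,\dots,k_r)=\sum_{\circ}\zeta_\F(k_1\circ\cdots\circ k_r)$ where each $\circ$ is a comma or a plus, which on the algebra side is implemented by a $\Q$-linear operator $\frak{H}^1\to\frak{H}^1$ (the ``$d$'' or ``$\sigma$-like'' map). Under this correspondence the claimed star-duality is the image of the non-star duality $\ZF(w)=\ZF(\phi(w))$ conjugated by that operator, and the sign $-$ and the switch from $\phi$ to $\tilde\alpha$ come out of the identity relating $\phi$, the comma/plus summation operator, and $\tilde\alpha$ (this is the FMZV analogue of the classical fact that Hoffman's duality becomes ``$\tilde\alpha$-antiduality'' for star values). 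Concretely I would verify the operator identity $(\text{comma/plus sum})\circ\phi = -\,\tilde\alpha\circ(\text{comma/plus sum})$ on $\frak{H}^1$ and then apply $\ZF$.

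The main obstacle I expect is the bookkeeping in the first identity: showing cleanly that the automorphism $\phi$ restricted to admissible words realizes exactly the reversal-plus-$\tau$ operation that appears on the right of the second line of Theorem \ref{dshF}, including getting the sign $(-1)^{|w|}$ to cancel correctly. The individual ingredients are all ``soft'' (algebra homomorphism property, a finite combinatorial identity of operators on $\frak{H}$), but the sign and the direction of the word-reversal are easy to get backwards, so I would be careful to pin down conventions first and perhaps verify on a short example such as $w=z_2=xy$, where $\phi(xy)=(x+y)(-y)=-xy-y^2$, against the expected dual. Once the operator-level identities $\phi\leftrightarrow$ reversal/$\tau$ and $(\text{comma/plus})\circ\phi=-\tilde\alpha\circ(\text{comma/plus})$ are in hand, both displayed equations follow by a one-line application of $\ZF$ and Theorem \ref{dshF}, so the computation genuinely is routine after the conventions are fixed, and I would simply cite \cite{hoffman_2015} and \cite{jarossay_2014} for the cases already in the literature.
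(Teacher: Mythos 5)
The paper gives no proof of Lemma \ref{dualF} at all: both identities are imported verbatim from Hoffman \cite{hoffman_2015} (for $\F=\A$) and Jarossay \cite{jarossay_2014} (for $\F=\mathcal{S}$). So the only fully safe part of your proposal is the fallback sentence where you cite those papers --- that is exactly what the authors do.

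Your attempted self-contained derivation has two concrete gaps. First, $\phi$ is \emph{not} the reversal-plus-$\tau$ operation up to a sign: for $w=xy$ one has $\phi(xy)=(x+y)(-y)=-xy-y^{2}$, while $\tau(xy)=xy$ and $R(z_{2})=z_{2}=xy$, and no factor $(-1)^{|w|}$ reconciles these. Consequently the second line of Theorem \ref{dshF} specialized to $w'=1$ yields only the reversal antisymmetry $\ZF(z_{k_1}\cdots z_{k_r})=(-1)^{k_1+\cdots+k_r}\ZF(z_{k_r}\cdots z_{k_1})$, which is strictly weaker than $\ZF(w)=\ZF(\phi(w))$: already at $w=xy$ the duality asserts $\zeta_{\F}(2)=-\zeta_{\F}(2)-\zeta_{\F}(1,1)$, which (given the stuffle relation $2\zeta_{\F}(1,1)+\zeta_{\F}(2)=\zeta_{\F}(1)^2=0$) is equivalent to $\zeta_{\F}(2)=0$ --- an input that does not follow from reversal plus stuffle and requires the shuffle--reversal formula with a nontrivial $w'$. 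The duality is the genuinely hard content of \cite{hoffman_2015} (via quasi-symmetric functions and $\binom{p-1}{n}\equiv(-1)^{n}$) and of \cite{jarossay_2014}, and cannot be conjured from Theorem \ref{dshF} with $w'$ trivial. Second, your proposed intertwining identity is backwards: writing $S$ for the comma/plus summation operator ($S(wy)=S_{1}(w)y$ with $S_{1}(x)=x$, $S_{1}(y)=x+y$), one computes $S\phi(xy)=-2xy-y^{2}$ but $-\tilde{\alpha}S(xy)=-y^{2}$, so $S\circ\phi\neq-\tilde{\alpha}\circ S$. The correct identity is $S\circ\tilde{\alpha}=-\phi\circ S$ (equivalently $\tilde{\alpha}=-\tilde{S}\phi S$ on $\frak{H}y$), which does establish the equivalence of the two displayed identities via $\ZF^{\star}=\ZF\circ S$; that half of your plan is salvageable once the roles of $\phi$ and $\tilde{\alpha}$ are swapped, but the first identity still has to be taken from the cited literature rather than deduced from Theorem \ref{dshF} as you describe.
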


\subsection{The first proof}
We define an derivation $\delta_l$ on $\frak{H}$ as a $\Q$-linear map $\delta_l\colon\frak{H}\to\frak{H}$ satisfying Leibniz's rule, and $\delta_l(x)=0$ and $\delta_l(y)=zx^{l-1}y$. 
\begin{lem}[Another type of the derivation relation for FMZVs]
For $l\in\Z_{\ge1}$, we have 
 \begin{align*} 
  \ZF (L_z^{-1}\delta_{l} L_z(w)) = 0 \quad (w \in \frak{H}y).  
 \end{align*}
\end{lem}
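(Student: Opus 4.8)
The plan is to reduce this "another type" of derivation relation to the original derivation relation for FMZVs (Theorem \ref{der_FMZVs}) by conjugating with the involution $\phi$. Observe that $L_z = \phi \circ L_x \circ \phi^{-1}$ on the relevant spaces, since $\phi(x) = z$ and $\phi$ is an algebra automorphism: for $w' \in \frak{H}$ we have $\phi(L_x(\phi^{-1}(w'))) = \phi(x\,\phi^{-1}(w')) = \phi(x)w' = z w' = L_z(w')$, so $L_z^{-1} = \phi L_x^{-1} \phi^{-1}$ as well. Thus $L_z^{-1}\delta_l L_z = \phi L_x^{-1} (\phi^{-1}\delta_l \phi) L_x \phi^{-1}$, and it suffices to identify the derivation $\phi^{-1}\delta_l\phi$ with $\partial_l$ (or with something whose kernel image under $\ZF$ we already control).

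First I would check that $\phi^{-1}\delta_l\phi = \partial_l$. Both sides are derivations of $\frak{H}$, so it is enough to compare their values on the generators $x$ and $y$. Recall $\phi(x)=z$, $\phi(y)=-y$, hence $\phi^{-1}(x) = $ (the preimage of $x$; note $\phi^2 = \mathrm{id}$ since $\phi(z)=\phi(x+y)=z-y=x$, so $\phi^{-1}=\phi$). Then $\phi^{-1}\delta_l\phi(x) = \phi\,\delta_l(z) = \phi(\delta_l(x)+\delta_l(y)) = \phi(0 + zx^{l-1}y) = x\,z^{l-1}(-y) = -xz^{l-1}y$... which is off by a sign from $\partial_l(x)=xz^{l-1}y$. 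Checking on $y$: $\phi^{-1}\delta_l\phi(y)=\phi\,\delta_l(-y) = -\phi(zx^{l-1}y) = -x z^{l-1}(-y) = xz^{l-1}y = -\partial_l(y)$. So in fact $\phi^{-1}\delta_l\phi = -\partial_l$. Since $\ZF$ is $\Q$-linear, $\ZF(L_x^{-1}(-\partial_l)L_x(v)) = -\ZF(L_x^{-1}\partial_l L_x(v)) = 0$ for $v \in \frak{H}^1$ by Theorem \ref{der_FMZVs}.

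It remains to bridge the domains. Given $w \in \frak{H}y$, set $v = \phi^{-1}(w) = \phi(w)$. Then $v \in \frak{H}y$ as well (since $\phi(\frak{H}y) = \frak{H}\phi(y) \cdot(\text{stuff})$—more carefully, $\phi$ maps $\frak{H}y$ into $\frak{H}^1$, which is the space Theorem \ref{der_FMZVs} applies to, because $\phi(\frak{H}) = \frak{H}$ and $\phi(y) = -y$, so any word ending in $y$ maps to a linear combination of words ending in $y$). Applying the duality $\ZF = \ZF\circ\phi$ from Lemma \ref{dualF} (valid on $\frak{H}^1$, and $L_z^{-1}\delta_l L_z(w) \in \frak{H}^1$), we get
\begin{align*}
\ZF(L_z^{-1}\delta_l L_z(w)) &= \ZF\bigl(\phi L_z^{-1}\delta_l L_z(w)\bigr) = \ZF\bigl(L_x^{-1}(\phi^{-1}\delta_l\phi) L_x \phi^{-1}(w)\bigr) \\
&= \ZF\bigl(L_x^{-1}(-\partial_l) L_x(v)\bigr) = -\ZF\bigl(L_x^{-1}\partial_l L_x(v)\bigr) = 0,
\end{align*}
where the last equality is Theorem \ref{der_FMZVs} applied to $v \in \frak{H}^1$. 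The main obstacle I anticipate is the bookkeeping around the maps $L_x, L_z$ and their interaction with $\phi$ at the "constant" $1$: one must verify $\phi \circ L_x \circ \phi = L_z$ genuinely as maps on all of $\frak{H}$ (or at least on the relevant subspace), being careful that $L_x$ and $L_z$ are defined piecewise with $L_\bullet(1)=1$, and that $\phi(1)=1$, so the identity survives at $1$; away from $1$ it is the straightforward computation above. The sign $\phi^{-1}\delta_l\phi = -\partial_l$ is harmless since we only care about membership in $\ker \ZF$.
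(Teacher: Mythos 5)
Your conjugation identities are all correct: $\phi$ is indeed an involution, $L_z=\phi L_x\phi$ on $\frak{H}$ (with the convention $L_{\bullet}(1)=1$), and $\phi\,\delta_l\,\phi=-\partial_l$, checked on the generators exactly as you do. In fact these are precisely the identities the paper uses --- but in the opposite direction. This lemma sits inside the section whose purpose is to give new proofs of Theorem \ref{der_FMZVs}: the paper proves the lemma first, by a self-contained argument, and then deduces Theorem \ref{der_FMZVs} from it via $\partial_l=-\phi\circ\delta_l\circ\phi$ and the duality $\ZF=\ZF\circ\phi$ of Lemma \ref{dualF}. Your proposal runs this deduction backwards, taking Theorem \ref{der_FMZVs} as input. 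As a standalone implication it is sound (and shows the lemma and Theorem \ref{der_FMZVs} are equivalent modulo the duality formula), but as a proof of the lemma in its intended role it is circular: the proof of Theorem \ref{der_FMZVs} that immediately follows would then rest on a lemma that was itself proved using Theorem \ref{der_FMZVs}.

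The idea you are missing is the one that gives the lemma independent content. For $w=x^{k_1-1}y\cdots x^{k_r-1}y$ one computes directly, using $\delta_l(z)=zx^{l-1}y$ and Leibniz's rule, that
\[
 L_z^{-1}\delta_l L_z(w)=x^{l-1}yw+\delta_l(w)=w\ast x^{l-1}y,
\]
i.e.\ the operator $L_z^{-1}\delta_l L_z$ is exactly harmonic multiplication by $z_l$. The lemma then follows from $\ZF(w\ast z_l)=\ZF(w)\,\ZF(z_l)$ (Theorem \ref{dshF}) together with the vanishing $\ZF(z_l)=0$ of depth-one finite multiple zeta values, with no reference to Theorem \ref{der_FMZVs}. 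If you want your argument to stand on its own, you would have to replace the appeal to Theorem \ref{der_FMZVs} by an independent proof of it --- which is exactly what this lemma is designed to supply.
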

\begin{proof}
 Let $w=x^{k_1-1}y\cdots x^{k_r-1}y$. 
 Since
 \begin{align*}
  L_z^{-1}\delta_{l} L_z(w) 
  &=x^{l-1}yx^{k_1-1}y\cdots x^{k_r-1}y+x^{k_1-1}zx^{l-1}yx^{k_2-1}y\cdots x^{k_r-1}y+\cdots \\
  &\quad +x^{k_1-1}y\cdots x^{k_{r-1}-1}yx^{k_r-1}zx^{l-1}y \\
  &=x^{k_1-1}y\cdots x^{k_r-1}y\ast x^{l-1}y \\
  &=w\ast x^{l-1}y,
 \end{align*}
 we find 
 \begin{align*}
  \ZF (L_z^{-1}\delta_{l} L_z(w)) 
  &=\ZF (w\ast x^{l-1}y) \\
  &=\ZF(w) \ZF(x^{l-1}y)=0
 \end{align*}
 by Theorem \ref{dshF} and the fact $\ZF(x^{l-1}y)=0$.
\end{proof}
\begin{proof}[Proof of Theorem \ref{der_FMZVs}]
Since $\partial_l=-\phi \circ \delta_l \circ \phi$, the first statement of Lemma \ref{dualF}, and the above lemma, 
we have
 \begin{align*}
  \ZF (L_x^{-1}\partial_{l}L_x(w))
  &=-\ZF (L_x^{-1}(\phi \circ \delta_l \circ \phi)L_x(w)) \\
  &=-\ZF (\phi L_z^{-1}\delta_{l}L_z\phi(w)) \\
  &=-\ZF (L_z^{-1}\delta_{l}L_z(\phi(w)))=0 
 \end{align*}
 for $w \in \frak{H}y$.
 Thus, we find Theorem \ref{der_FMZVs} holds.
\end{proof}

\subsection{The second proof} 
The similar argument of Ihara-Kajikawa-Ohno-Okuda \cite{ihara_kajikawa_ohno_okuda_2011} shows the equivalence of the following Theorem \ref{derF_star} and the derivation relation for FMZVs.
\begin{thm} \label{derF_star}
For $l \in \Z_{\ge1}$, we have
 \begin{align*}
  \ZF^{\star} ( \tilde{S} \partial_l S (w) )=-\ZF^{\star} (y^{l-1}(y-x)w) \quad (w \in \frak{H}^1). 
 \end{align*}
\end{thm}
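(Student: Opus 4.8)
The plan is to adapt the computation in the proof of Theorem~\ref{ikoo}, keeping track of the extra term that appears because $w$ now ranges over $\frak{H}^1$ rather than $\frak{H}^0$. By $\Q$-linearity it suffices to prove the identity for monomials; since $\partial_l S(1)=0$ the constant word causes no trouble, and every nonconstant monomial of $\frak{H}^1$ lies in $\frak{H}y$ and can be written $w=y^{e_0}xy^{e_1}\cdots xy^{e_n}$ with $e_0,\dots,e_{n-1}\ge 0$, $e_n\ge 1$ and $n\ge 0$.

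First I would record two elementary facts. A \emph{shift identity}: for $w'\in\frak{H}y$ one has $S(yw')=L_zS(w')$, hence $\partial_lS(yw')=L_z\partial_lS(w')$ since $\partial_l(z)=0$, and then $\tilde{S}\partial_lS(yw')=y\cdot\tilde{S}\partial_lS(w')$ since $\tilde{S}_1(z)=y$ and $\partial_lS(w')\in\frak{H}y$; iterating, $\tilde{S}\partial_lS(y^{e_0}w')=y^{e_0}\cdot\tilde{S}\partial_lS(w')$. And the behaviour of the involution $\tilde{\alpha}$ of $\frak{H}y$: $W\tilde{\alpha}(v')=\tilde{\alpha}\big(\alpha(W)v'\big)$ for all $W\in\frak{H}$ and $v'\in\frak{H}y$, and (when $n\ge 1$) $\tilde{\alpha}(w)=z_{e_0+1}u$ with $u:=z_{e_1+1}\cdots z_{e_{n-1}+1}z_{e_n}$.

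Now let $n\ge 1$ and write $w=y^{e_0}w_0$ with $w_0=xy^{e_1}\cdots xy^{e_n}\in\frak{H}^0$. The computation in the proof of Theorem~\ref{ikoo}, applied to $w_0$, gives $\tilde{S}\partial_lS(w_0)=-L_x\tilde{\alpha}(u\,\overline{\ast}\,z_l)$, so by the shift identity and $W\tilde{\alpha}(v')=\tilde{\alpha}(\alpha(W)v')$ with $W=y^{e_0}x$ one gets $\tilde{S}\partial_lS(w)=-\tilde{\alpha}\big(x^{e_0}y\,(u\,\overline{\ast}\,z_l)\big)$; applying $\ZF^\star$ and the duality $\ZF^\star\circ\tilde{\alpha}=-\ZF^\star$ of Lemma~\ref{dualF} then yields $\ZF^\star(\tilde{S}\partial_lS(w))=\ZF^\star\big(x^{e_0}y\,(u\,\overline{\ast}\,z_l)\big)$. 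Next, the defining recursion of $\overline{\ast}$ gives $z_{e_0+1}(u\,\overline{\ast}\,z_l)=(z_{e_0+1}u)\,\overline{\ast}\,z_l-z_lz_{e_0+1}u+z_{e_0+1+l}u$; since $\ZF^\star$ is multiplicative for $\overline{\ast}$ (the harmonic-star counterpart of Theorem~\ref{dshF}, deducible from $\ZF^\star=\ZF\circ S$, the algebra identity $S(v\,\overline{\ast}\,v')=S(v)\ast S(v')$, and Theorem~\ref{dshF}) and $\ZF^\star(z_l)=\zeta_{\F}(l)=0$, the first summand dies and $\ZF^\star(\tilde{S}\partial_lS(w))=\ZF^\star\big((z_{e_0+1+l}-z_lz_{e_0+1})u\big)$. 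Finally $z_{e_0+1+l}-z_lz_{e_0+1}=x^{l-1}(x-y)x^{e_0}y$, while from $w=\tilde{\alpha}(z_{e_0+1}u)$ and $W\tilde{\alpha}(v')=\tilde{\alpha}(\alpha(W)v')$ with $W=y^{l-1}(y-x)$ one checks $\tilde{\alpha}\big(y^{l-1}(y-x)w\big)=x^{l-1}(x-y)x^{e_0}y\,u$; a last use of Lemma~\ref{dualF} gives $\ZF^\star(\tilde{S}\partial_lS(w))=-\ZF^\star\big(y^{l-1}(y-x)w\big)$, as wanted.

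The degenerate case $n=0$, $w=y^{e_0}$, falls outside this scheme and is done directly: one has $\tilde{S}\partial_lS(y^{e_0})=-y^{e_0-1}xy^l$, and after applying Lemma~\ref{dualF} to both sides the identity becomes $\ZF^\star(z_{e_0}z_l)+\ZF^\star(z_lz_{e_0})=0$, which follows from $z_{e_0}\,\overline{\ast}\,z_l=z_{e_0}z_l+z_lz_{e_0}-z_{e_0+l}$, $\overline{\ast}$-multiplicativity and $\zeta_{\F}(e_0)=\zeta_{\F}(e_0+l)=0$. I expect the work to be essentially bookkeeping: iterating the shift identity correctly, checking that the boundary word $y^{l-1}(y-x)w$ is precisely $\tilde{\alpha}$ of the surplus terms $z_{e_0+1+l}u-z_lz_{e_0+1}u$, and not forgetting that the formula $\tilde{S}\partial_lS(w_0)=-L_x\tilde{\alpha}(u\,\overline{\ast}\,z_l)$ from the proof of Theorem~\ref{ikoo} is available only after the leading $y$'s are stripped off, so that $n=0$ (where $w$ is a power of $y$) needs its own argument. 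The only ingredient not already present in the excerpt is the $\overline{\ast}$-multiplicativity of $\ZF^\star$, which is standard (cf.\ \cite{muneta_2009}).
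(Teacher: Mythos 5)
Your proof is correct and takes essentially the same route as the paper: both identify the relevant combination with $\tilde{\alpha}$ of a harmonic-star product $(\cdot)\,\overline{\ast}\,z_l$ and conclude from the duality $\ZF^{\star}\circ\tilde{\alpha}=-\ZF^{\star}$, the $\overline{\ast}$-multiplicativity of $\ZF^{\star}$, and $\zeta_{\F}(l)=0$ --- the paper just does the computation in one pass for a general word $w=y^{k_1-1}xy^{k_2-1}\cdots xy^{k_r-1}y$ of $\frak{H}y$, recognizing $y^{l-1}(y-x)w+\tilde{S}\partial_l S(w)=-\tilde{\alpha}(z_{k_1}\cdots z_{k_r}\,\overline{\ast}\,z_l)$ directly, whereas you recycle the $\frak{H}^0$ computation from Theorem~\ref{ikoo} and extract the boundary term via the $\overline{\ast}$ recursion. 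The one point to watch (a defect shared with the paper's own statement, whose proof likewise only treats words of $\frak{H}y$) is $w=1$, for which the right-hand side involves $y^{l-1}(y-x)\notin\frak{H}^1$, so the constant word is not quite ``no trouble.''
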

Now, we prove this theorem instead of Theorem \ref{der_FMZVs}.
\begin{proof}
 For $w=y^{k_1-1}xy^{k_2-1} \cdots xy^{k_r-1} y \,\,(k_1,\dots,k_r\ge1)$, 
 \begin{align*}
  \tilde{S} \partial_l S (w)  
  &= \tilde{S} \partial_l (z^{k_1-1}xz^{k_2-1} \cdots xz^{k_r-1}y) \\
  &= \tilde{S} (z^{k_1-1}xz^{l-1}yz^{k_2-1}xz^{k_3-1} \cdots xz^{k_r-1}y +\quad\cdots\cdots \\
   &\quad\quad +z^{k_1-1}xz^{k_2-1} \cdots xz^{k_{r-1}-1} xz^{l-1}y z^{k_r-1}y - z^{k_1-1}xz^{k_2-1} \cdots xz^{k_r-1} xz^{l-1}y) \\ 
  &= y^{k_1-1}xy^{l-1}(y-x)y^{k_2-1}xy^{k_3-1}\cdots xy^{k_r} +\quad\cdots\cdots \\
   &\quad +y^{k_1-1}xy^{k_2-1} \cdots xy^{k_{r-1}-1} xy^{l-1}(y-x) y^{k_r} - y^{k_1-1}xy^{k_2-1} \cdots xy^{k_r-1} xy^{l}.  
 \end{align*}
By Theorem \ref{dshF} and Lemma \ref{dualF}, we have
 \begin{align*}
  &\ZF^{\star}( y^{l-1}(y-x)w +\tilde{S} \partial_l S (w) ) \\
  &= \ZF^{\star}( \tilde{\alpha} (x^{l-1}(x-y)x^{k_1-1}y \cdots x^{k_r-1}y +\quad\cdots\cdots \\
   &\qquad\quad\,\,\,\, +x^{k_1-1}y\cdots x^{k_{r-1}-1} yx^{l-1}(x-y) x^{k_r-1}y - x^{k_1-1}y\cdots x^{k_r-1} yx^{l-1}y) ) \\ 
  &= -\ZF^{\star}( \tilde{\alpha} (z_{k_1}\cdots z_{k_r} \,\overline{\ast}\, z_{l}) ) \\
  &= \ZF^{\star}(z_{k_1}\cdots z_{k_r} \,\overline{\ast}\, z_{l}) \\
  &= \ZF^{\star}(z_{k_1}\cdots z_{k_r}) \ZF^{\star}(z_{l}) =0.
 \end{align*}
 Thus, we find the desired result.
\end{proof}

\subsection{The third proof} 
In this subsection, we introduce the proof obtained by Ihara. 
\begin{lem}[Ihara-Kaneko-Zagier {\cite{ihara_kaneko_zagier_2006}}] 
For $w\in\frak{H}^{1}$, we have
 \begin{align*} 
 \frac{1}{1-yu} \ast w=\frac{1}{1-yu} \sh\Delta_u(w),
 \end{align*}
where $\Delta_u$ is the automorphism of $\hat{ \frak{H} }$ given by
 \begin{align*}
 \Delta_u=\exp \left( \sum_{l=1}^\infty \frac{\partial_l}{l}(-u)^l \right),
 \end{align*}
and $u$ is a formal parameter.
\end{lem}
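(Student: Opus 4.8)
The plan is to reduce the identity, by $\Q[[u]]$-bilinearity of $\ast$ and $\sh$, to the case $w=z_{k_1}\cdots z_{k_r}$ a single word, and then to argue by induction on the length $r$. Throughout write $g(u):=\tfrac{1}{1-yu}=\sum_{n\ge0}y^n u^n\in\frak{H}[[u]]$. Two elementary facts about $g(u)$ are needed: it is invertible for $\sh$ with inverse $g(-u)$ (indeed $g(u)$ is the shuffle exponential of $uy$, since $y^{\sh n}=n!\,y^n$), and $\tfrac{d}{du}g(u)=g(u)\sh y=y\sh g(u)$. Consequently the claim is equivalent to $\Delta_u(w)=g(-u)\sh\bigl(g(u)\ast w\bigr)$, and both sides equal $w$ at $u=0$ because $\Delta_0=\mathrm{id}$.

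For the harmonic side one gets a closed recursion. From $g(u)=1+uy\cdot g(u)$ (left concatenation) it follows that $y\,g(u)\ast v=u^{-1}\bigl(g(u)\ast v-v\bigr)$ for every $v\in\frak{H}^1$, and substituting $v=z_kw'$ into the defining recursion of $\ast$ and simplifying yields
\[
 (1-uy)\cdot\bigl(g(u)\ast z_kw'\bigr)=\bigl(z_k+uz_{k+1}\bigr)\cdot\bigl(g(u)\ast w'\bigr),
\]
the factors $1-uy$ and $z_k+uz_{k+1}$ acting by left concatenation; together with $g(u)\ast1=g(u)$ this determines $g(u)\ast w$ for every $w\in\frak{H}^1$. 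In parallel I would make $\Delta_u$ explicit: the derivation $D_u:=\sum_{l\ge1}\tfrac{(-u)^l}{l}\partial_l$ sends $z\mapsto0$ and $x\mapsto-x\tfrac{\log(1+uz)}{z}y$ (a power series in $z$), and from this one checks that $\Delta_u=\exp(D_u)$ is the automorphism of $\widehat{\frak{H}}$ determined by $\Delta_u(z)=z$ and $\Delta_u(x)=x(1+uy)^{-1}$, equivalently $\Delta_u(y)=x+y-x(1+uy)^{-1}$.

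The remaining step, which is the main obstacle, is to show that $v\mapsto g(u)\sh\Delta_u(v)$ satisfies the same recursion and initial condition as $v\mapsto g(u)\ast v$, so that the two coincide by induction on $r$. The difficulty is that the shuffle product removes one leftmost letter at a time whereas $\Delta_u(z_k)$ is an infinite series both in $u$ and in word length, so a single use of the shuffle recursion does not close the induction. I would instead pass to differential equations in $u$: writing $\mathcal{D}_u:=\Delta_u^{-1}\tfrac{d}{du}\Delta_u$ for the (explicit) derivation such that $\tfrac{d}{du}\Delta_u=\Delta_u\circ\mathcal{D}_u$, one finds directly that $\tfrac{d}{du}\bigl(g(u)\sh\Delta_u(w)\bigr)=y\sh\bigl(g(u)\sh\Delta_u(w)\bigr)+g(u)\sh\Delta_u(\mathcal{D}_u w)$, and the content of the lemma becomes the identity $(g(u)\sh z_1)\ast w=z_1\sh\bigl(g(u)\ast w\bigr)+g(u)\ast(\mathcal{D}_u w)$ for all $w\in\frak{H}^1$; granting this, $g(u)\ast w$ obeys the same first-order equation as $g(u)\sh\Delta_u(w)$ with the same value at $u=0$, and uniqueness of formal power series solutions finishes the proof. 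This last identity specializes at $u=0$ to the clean relation $z_1\sh w-z_1\ast w=\partial_1(w)$ (one has $\mathcal{D}_0=-\partial_1$), and proving its $u$-deformation --- the compatibility between the harmonic and shuffle products mediated by $\Delta_u$ --- is the combinatorial heart, for which one follows Ihara--Kaneko--Zagier.
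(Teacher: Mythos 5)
First, a point of comparison: the paper gives no proof of this lemma --- it is quoted from Ihara--Kaneko--Zagier and used as a black box in the third proof of the derivation relation for FMZVs --- so your proposal has to stand on its own. Your preparatory steps are all correct: the shuffle-invertibility of $g(u)=\frac{1}{1-yu}$ with inverse $g(-u)$, the identity $\frac{d}{du}g(u)=y\sh g(u)$, the harmonic recursion $(1-uy)\cdot\bigl(g(u)\ast z_kw'\bigr)=(z_k+uz_{k+1})\cdot\bigl(g(u)\ast w'\bigr)$, and the closed form $\Delta_u(x)=x(1+uy)^{-1}$ (which the paper itself quotes). The gap is the last step. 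You reduce the lemma to the identity $(g(u)\sh z_1)\ast w=z_1\sh\bigl(g(u)\ast w\bigr)+g(u)\ast(\mathcal{D}_uw)$ and then declare that proving it is ``the combinatorial heart, for which one follows Ihara--Kaneko--Zagier.'' That is not a detail you may defer: this identity is at least as deep as the lemma itself (already at $u=0$ it is the nontrivial relation $z_1\sh w-z_1\ast w=\partial_1(w)$, and for general $u$ it involves the derivation $\mathcal{D}_u=\Delta_u^{-1}\circ\frac{d}{du}\Delta_u$, which is not explicit until you have inverted $\Delta_u$). Since the lemma you are proving is itself the Ihara--Kaneko--Zagier result, sending the reader back to that paper for the decisive identity makes the argument circular.

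The frustrating part is that the direct induction you abandoned does close, and more cheaply than the differential-equation detour. One application of the shuffle recursion gives, for any letter $a$ and any word $V$, $(1-uy)\cdot\bigl(g(u)\sh aV\bigr)=a\cdot\bigl(g(u)\sh V\bigr)$ (the factors acting by left concatenation), and summing this over $V=y^jV'$ with weights $(-u)^j$ gives $g(u)\sh\bigl((1+uy)^{-1}V'\bigr)=(1-uy)\cdot\bigl(g(u)\sh V'\bigr)$. Writing $\Delta_u(z_k)=\bigl(x(1+uy)^{-1}\bigr)^{k-1}\bigl(y+ux(1+uy)^{-1}y\bigr)$ and peeling letters with these two facts yields $(1-uy)\cdot\bigl(g(u)\sh\Delta_u(z_kw')\bigr)=(z_k+uz_{k+1})\cdot\bigl(g(u)\sh\Delta_u(w')\bigr)$, which is exactly the recursion you established for $g(u)\ast z_kw'$; since both sides agree on $w=1$ and left concatenation by $1-uy$ is invertible, induction on the length of $w$ finishes the proof. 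As written, however, your proposal asserts rather than proves its key identity.
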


\begin{proof}[Proof of Theorem \ref{der_FMZVs} obtained by Ihara]
 From Ihara-Kaneko-Zagier {\cite{ihara_kaneko_zagier_2006}}, we note that 
 \begin{align*}
  \Delta_u(x)=x(1+yu)^{-1},\quad\Delta_u(y)=y+x(1+yu)^{-1}yu.
 \end{align*}
By theorem \ref{dshF}, we have
 \begin{align*} 
  \ZF \left(\frac{1}{1-yu} \ast w \right)=\ZF \left( \frac{1}{1-yu} \right) \ZF(w)=\ZF(w),  
 \end{align*}
and 
 \begin{align*} 
 \ZF\left(\frac{1}{1-yu} \sh\Delta_u(w)\right) 
 &=\ZF\left(\frac{1}{1+yu} \Delta_u(w)\right) \\
 &=\ZF\left(L_x^{-1}\Delta_u(x)\Delta_u(w)\right) \\
 &=\ZF\left(L_x^{-1}\Delta_u(xw)\right) \\
 &=\ZF\left(L_x^{-1}\Delta_u L_x(w)\right).
 \end{align*}
Then, we have 
 \begin{align*} 
 \ZF\left(L_x^{-1} (1-\Delta_u) L_x(w)\right) =0.
 \end{align*}
This finishes the proof. 
\end{proof}

\section*{Acknowledgements}
The authors would like to thank Professor Kentaro Ihara for sending us the proof of the derivation relation (the third proof for finite multiple zeta values).


\begin{thebibliography}{100}
\bibitem{hoffman-97} M. E. Hoffman, \textit{The algebra of multiple harmonic series}, J.\ Algebra \textbf{194} (1997), 477--495. 
\bibitem{hoffman_2015} M. E. Hoffman, \textit{Quasi-symmetric functions and mod p multiple harmonic sums}, Kyushu J.\ Math.\ \textbf{69} (2015), 345--366. 
\bibitem{ihara_kajikawa_ohno_okuda_2011} K. Ihara, J. Kajikawa, Y. Ohno and J.\ Okuda, \textit{Multiple zeta values vs. multiple zeta-star values}, J.\ Algebra \textbf{322} (2011), 187--208. 
\bibitem{ihara_kaneko_zagier_2006} K. Ihara, M. Kaneko and D. Zagier, \textit{Derivation and double shuffle relations for multiple zeta values}, Compositio Math.\ \textbf{142} (2006), 307--338. 
\bibitem{jarossay_2014} D.\ Jarossay, \textit{Double m\'elange des multiz\^etas finis et multiz\^etas sym\'etris\'es}, C.\ R.\ Acad.\ Sci.\ Paris, \textbf{352} (2014), 767--771. 
\bibitem{jarossay_2017} D.\ Jarossay, \textit{An explicit theory of $\pi_{1}^{\textrm{un,crys}}(\mathbb{P}^{1}-\{0,\mu_{N},\infty\})$ II-1:Standard algebraic equations of prime weighted multiple harmonic sums and adjoint multiple zeta values}, 
arXiv:1412.5099. 
\bibitem{kaneko_2014} M. Kaneko, \textit{Finite multiple zeta values (in Japanese)},  RIMS  K\^oky\^uroku Bessatsu  \textbf{B68} (2017), 175–-190. 
\bibitem{kaneko_zagier_2017} M. Kaneko and D.\ Zagier, \textit{Finite multiple zeta values}, in preparation. 
\bibitem{kawashima_2009} G. Kawashima, \textit{A class of relations among multiple zeta values}, J.\ Number Theory \textbf{129} (2009), 755--788. 
\bibitem{muneta_2009} S.\ Muneta, \textit{Algebraic setup of non-strict multiple zeta values}, Acta Arithmetica \textbf{136} (2009), 7--18. 
\bibitem{murahara_2016} H.\ Murahara, \textit{Derivation relations for finite multiple zeta values}, Int.\ J.\ Number Theory \textbf{13} (2017), 419--427. 
\bibitem{ohno_99} Y. Ohno, \textit{A generalization of the duality and sum formulas on the multiple zeta values}, J.\ Number Theory \textbf{74} (1999), 39--43. 
\bibitem{oyama_2015} K. Oyama, \textit{Ohno-type relation for finite multiple zeta values}, Kyushu J.\ Math.\ \textbf{72} (to appear). 
\bibitem{tanaka_2009} T. Tanaka, \textit{On the quasi-derivation relation for multiple zeta values}, J.\ Number Theory \textbf{129} (2009), 2021--2034. 
\end{thebibliography}
\end{document}